\UseRawInputEncoding
\documentclass[bj, square, numbers]{imsart2}
\usepackage{graphicx}
\usepackage[T1]{fontenc}
\usepackage{tikz}
\pgfrealjobname{TopFuncHRSarXiv}
\usepackage{hyperref}
\usepackage{amsmath}
\usepackage{amssymb}
\usepackage{amsthm}
\usepackage{mathtools}
\usepackage{commath}
\usepackage{enumitem}
\usepackage{lipsum}
\usepackage{bm}


\usepackage[bottom]{footmisc}

\newcommand{\by}{\mathbf{y}}
\newcommand{\iid}{\text{i.i.d}}
\newcommand{\Var}{\mathrm{Var}}
\newcommand{\Cov}{\mathrm{Cov}}
\newcommand{\Poi}{\text{Poi}}

\newcommand{\R}{\mathbb{R}}

\def\P{\mathbb{P}}
\def\E{\mathbb{E}}
\newcommand{\Pn}{\mathcal{P}_n}
\newcommand{\N}{\mathbb{N}}

\newcommand{\X}{\mathcal{X}}

\newcommand{\Y}{\mathcal{Y}}

\newcommand{\basd}{\mathbf{e}_d}
\newcommand{\ind}[1]{\mathbf{1}\big\{#1\big\}} 
\newcommand{\y}{\mathbf{y}}
\newcommand{\x}{\mathbf{x}}
\newcommand{\diam}{\mathrm{diam}}

\DeclarePairedDelimiterX{\inp}[2]{\langle}{\rangle}{#1, #2}

\theoremstyle{plain}
\newtheorem{theorem}{Theorem}[section]
\newtheorem{lemma}[theorem]{Lemma}
\newtheorem{proposition}[theorem]{Proposition}
\newtheorem{corollary}[theorem]{Corollary}
\theoremstyle{definition}
\newtheorem{definition}[theorem]{Definition}
\newtheorem{remark}[theorem]{Remark}
\newtheorem{example}[theorem]{Example}

\begin{document}

\begin{frontmatter}
\title{Central limit theorems and asymptotic independence for local $U$-statistics \\ on diverging halfspaces}
\runtitle{CLTs and asymptotic independence for local $U$-statistics on diverging halfspaces}

\begin{aug}
\author[A]{\fnms{Andrew M.}~\snm{Thomas}\orcid{0000-0002-9370-5477}}
\address[A]{Center for Applied Mathematics, Cornell University}
\end{aug}

\begin{abstract}
We consider the stochastic behavior of a class of local $U$-statistics of Poisson processes---which include subgraph and simplex counts as special cases, and amounts to quantifying clustering behavior---for point clouds lying in diverging halfspaces. We provide limit theorems for distributions with light and heavy tails. In particular, we prove finite-dimensional central limit theorems. In the light tail case we investigate tails that decay at least as slow as exponential and at least as fast as Gaussian. These results also furnish as a corollary that $U$-statistics for halfspaces diverging at different angles are asymptotically independent, and that there is no asymptotic independence for heavy-tailed densities. Using state-of-the-art bounds derived from recent breakthroughs combining Stein's method and Malliavin calculus, we quantify the rate of this convergence in terms of Kolmogorov distance. We also investigate the behavior of local $U$-statistics of a Poisson Process conditioned to lie in diverging halfspace and show how the rate of convergence in the Kolmogorov distance is faster the lighter the tail of the density is.
\end{abstract}

\begin{keyword}
\kwd{$U$-statistics}
\kwd{Asymptotic independence}
\kwd{Central limit theorems}
\kwd{Stein's method}
\kwd{Conditional extremes}
\end{keyword}

\end{frontmatter}

\section{Introduction}

Attempts to investigate the behavior of shapes in the tails of distributions have thus far focused on spherically symmetric densities outside of an expanding ball \cite{owada_fclt, thomas_owada2, owada_crackle, betti_tail}. Though some of these results can easily be expanded to densities with elliptical level sets, results for more general densities remain unexplored. Densities with heavy and light tails (in the sense of exponential decay) have been explored in the literature of geometric and topological functionals, however limit theory for geometric summaries of extreme samples for even lighter tails---such as the Gaussian distribution---has gotten little attention. This can be attributed to the absence of topological crackle \cite{crackle} in the Gaussian tail case. In other words, for a Poisson process with intensity proportional to a Gaussian, connected components and higher dimensional topological features rarely form outside of a contractible core of balls centered at the points of the process. However, the lack of interesting topology does not preclude noteworthy clustering phenomena. In this study, we examine said clustering phenomena by means of the limit theory of a class of local $U$-statistics $S_{k,n}(\theta)$ of Poisson processes $\Pn$ whose intensities are $n$ times the measure induced by heavy and light-tailed densities; the points of $\Pn$ are restricted to diverging halfspaces $H_n(\theta) = t_nH(\theta)$, where $t_n  \to \infty$ as $n \to \infty$ and $\theta$ lies in the standard Euclidean $(d-1)$-dimensional unit sphere $S^{d-1}$. We do so in a rather general setup where the level sets of our densities are  ``rotund'' \cite{balkema2007, balkema2004}, or egg-shaped, rather than spherically symmetric. Our main results are the following finite-dimensional central limit theorems on the set of real-valued functions on $S^{d-1}$.

\begin{theorem}\label{t:main}
Under appropriate conditions on $t_n$ and the density $f$, if $f$ has a light (exponentially-decaying) tail, then $(S_{k,n}(\theta), \, \theta \in S^{d-1})$ converges in a finite-dimensional sense to a white noise random field. That is, for any $m \in \mathbb{N}$ and $\theta_1, \dots, \theta_m \in S^{d-1}$ we have 
\[
\tau_{k,n}^{-1/2} \Big(S_{k,n}(\theta_1)-\E[S_{k,n}(\theta_1)] , \dots, S_{k,n}(\theta_m) - \E[S_{k,n}(\theta_m)]\Big) \Rightarrow \big(W(\theta_i)\big)_{i=1}^m
\]
where $W = (W(\theta), \, \theta \in S^{d-1})$ are independent mean zero Gaussians with variance $\Var(W(\theta))$ depending on $\theta$; the variance and normalization sequence $(\tau_{k,n})_{n \geq 1}$ also depend on $f$ and $d$. Furthermore, for any $m \in \N$, $\theta_1, \dots, \theta_m \in S^{d-1}$ and appropriate normalization sequence $(\upsilon_{k,n})_{n \geq 1}$, if $f$ has a heavy tail
\[
\upsilon_{k,n}^{-1/2} \Big(S_{k,n}(\theta_1)-\E[S_{k,n}(\theta_1)] , \dots, S_{k,n}(\theta_m) - \E[S_{k,n}(\theta_m)]\Big) \Rightarrow \big(\mathcal{G}(\theta_i)\big)_{i=1}^m
\]
where $\mathcal{G} = (\mathcal{G}(\theta), \, \theta \in S^{d-1})$ is a Gaussian process with covariance function 
\[
C(\theta_1, \theta_2) := \nu_h\big(H(\theta_1) \cap H(\theta_2)\big),
\] 
and $\nu_h$ is an absolutely continuous control measure on $\R^d$ depending on the kernel $h = (h^k_r)_{r \geq 0}$ of the $U$-statistic, the shape of the level sets, and the regular variation exponent associated to the tail of $f$. 
\end{theorem}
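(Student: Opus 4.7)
The plan is to combine the Cram\'er--Wold device with the Malliavin--Stein bounds for Poisson $U$-statistics (Peccati--Sol\'e--Taqqu--Utzet, Last--Peccati, Schulte, Eichelsbacher--Th\"ale) alluded to in the abstract. Fix $m$, $\theta_1, \dots, \theta_m \in S^{d-1}$ and coefficients $a_1, \dots, a_m \in \R$, and set
\[
T_n := \sum_{i=1}^m a_i \big(S_{k,n}(\theta_i) - \E[S_{k,n}(\theta_i)]\big).
\]
It suffices to show that $T_n$, rescaled by the square root of its variance limit, is asymptotically standard normal, and to identify the covariance of the limiting Gaussian vector with the one prescribed by the theorem. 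The white-noise structure in the light case and the Gaussian-process structure in the heavy case both then emerge from a single variance calculation applied to arbitrary linear combinations.

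First I would establish the asymptotic covariance structure via the Wiener--It\^o chaos decomposition
\[
S_{k,n}(\theta_i) - \E[S_{k,n}(\theta_i)] = \sum_{r=1}^{k} I_r\big(f_{i,n}^{(r)}\big),
\]
where each symmetric kernel $f_{i,n}^{(r)}$ is supported on configurations of $r$ points lying in $H_n(\theta_i)$, so that
\[
\Cov\big(S_{k,n}(\theta_i), S_{k,n}(\theta_j)\big) = \sum_{r=1}^k r!\, \big\langle f_{i,n}^{(r)}, f_{j,n}^{(r)} \big\rangle_{L^2}
\]
is controlled, up to integration against additional Poisson points, by the $f$-mass of the intersection $H_n(\theta_i) \cap H_n(\theta_j)$. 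Under the rotund light-tailed regime, a density decomposition in the style of Balkema--Embrechts along the tangent hyperplane at the nearest level-set point shows that this intersection mass is negligible relative to the mass of a single halfspace whenever $\theta_i \neq \theta_j$, yielding asymptotic independence; the normalization $\tau_{k,n}$ is then identified as the variance of the dominant chaos on $H_n(\theta)$ alone. Under regular variation, the same decomposition, after rescaling by $t_n$, produces a finite limit control measure $\nu_h$ on $\R^d$ determined by the kernel $h = (h^k_r)_{r \geq 0}$, the shape of the level sets, and the regular-variation index, and delivers $C(\theta_1, \theta_2) = \nu_h\big(H(\theta_1) \cap H(\theta_2)\big)$.

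With the variance of $T_n$ identified, the Gaussian limit follows from a fourth-moment / contraction bound of Malliavin--Stein type: the Kolmogorov distance between the normalized $T_n$ and a standard normal is controlled by finitely many terms of the form $\|f_n^{(r)} \star^p_\ell f_n^{(r')}\|_{L^2}$ together with the fourth cumulants of the chaos components. By the locality of $h$, each contraction reduces to a geometric integral over configurations concentrated near the boundary hyperplane $\partial H_n(\theta)$, which can be evaluated after an extreme-value change of variables using the tangent-hyperplane parameterization and the natural excess function of $f$ (polynomial in the heavy case, of order of the inverse density-gradient in the light case). The principal obstacle is controlling these contraction norms uniformly in the rotund, non-spherically symmetric setting and proving each is $o(\Var(T_n)^2)$: polar symmetry, on which the earlier spherically symmetric literature \cite{owada_fclt, thomas_owada2, betti_tail} leans, is unavailable, so a uniform asymptotic expansion of $f$ in a neighborhood of $\partial H_n(\theta)$ must be established and then inserted into the contraction integrals to recover the precise order of decay.

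A secondary, conceptually important point is that $C(\theta_1, \theta_2) = \nu_h\big(H(\theta_1) \cap H(\theta_2)\big)$ must define a positive semi-definite kernel on $S^{d-1}$, which is automatic once $\nu_h$ is realized as a $\sigma$-finite measure, since intersection covariances of such measures are always positive semi-definite; this ensures that the finite-dimensional Gaussian limits assemble into a bona fide Gaussian process $\mathcal{G}$ on $S^{d-1}$ in the heavy-tailed case, while in the light-tailed case the vanishing off-diagonals collapse the limit to the white-noise field $W$.
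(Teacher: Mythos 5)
Your high-level architecture---Cram\'er--Wold applied to $G_n(\Pn)=\sum_i a_i S_{k,n}(\theta_i)$, extreme-value changes of variables to identify the covariance asymptotics (vanishing cross-covariances for distinct angles in the light-tailed case, a control measure of $H(\theta_1)\cap H(\theta_2)$ in the regularly varying case), and a Malliavin--Stein bound for the normal approximation---matches the paper, which makes the informal statement precise in Theorems~\ref{t:fidi_bigtail}, \ref{t:fidi_litetail} and \ref{t:fidi_heavy}. The genuine gap is in the normal-approximation step. You route it through the Wiener--It\^o chaos decomposition and the contraction norms $\lVert f_n^{(r)}\star_\ell^p f_n^{(r')}\rVert_{L^2}$, and you yourself flag the control of these contractions in the rotund, non-spherically-symmetric setting as the ``principal obstacle,'' requiring a uniform expansion of $f$ near $\partial H_n(\theta)$ that you do not supply; as written, the hardest analytic step is deferred rather than executed, so the CLT is not yet proved. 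The paper never touches contraction norms: it invokes the local $U$-statistic bound of Theorem 6.2 in \cite{reitzner2013}, upgraded to Kolmogorov distance via Theorem 4.2 of \cite{schulte2016} and restated as Theorem~\ref{t:pu}, which bounds $d_K\big(\bar{G}_n(\Pn),Z\big)$ by $c_k\,n^{(k+1)/2}\big(1\vee B_n(\kappa)^{3k/2}\big)\lVert h_n^2\rVert_f/\Var\big(G_n(\Pn)\big)$ as in \eqref{e:wass_bound}. With kernels satisfying (H2) and (H4), the numerator only needs the $L^4$-type estimate of Lemma~\ref{l:4th_bd}, which is a rerun of the dominated-convergence bounds already established for the moment asymptotics (Propositions~\ref{p:mom_conv_exp}, \ref{p:mom_conv_exp_lite}, \ref{p:mom_conv_rv}); no boundary-layer expansion of $f$ beyond those computations is required. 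So the obstacle you defer is avoidable, but until either the contraction estimates are carried out or such a local-$U$-statistic bound is substituted, your argument is a plan rather than a proof.

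Some smaller points. The chaos decomposition of a $U$-statistic over $(k+1)$-tuples runs to chaos order $k+1$, not $k$. The antipodal case $\theta_i=-\theta_j$ must be treated separately (the halfspaces are disjoint, so the covariance is exactly zero by Poisson independence), and for $\theta_i\neq\pm\theta_j$ the paper's vanishing is not a bare intersection-mass estimate: after the change of variables it comes from smoothness and strict convexity of the rotund level set, via the a.e.\ limit $\ind{\alpha_n(x)\in A^{-1}(H_n(\theta_j))}\to\ind{p(\theta_i)\in H(\theta_j)}=0$ at \eqref{e:lim_theta1theta2}; your tangent-hyperplane heuristic is in the same spirit but would need this made quantitative. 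You also need hypotheses guaranteeing positivity of the limiting variances (the paper's $\mathcal{I}_{k,\ell}>0$, cf.\ Remark~\ref{r:clt_varpos}) before normalizing, and the regimes $ng(t_n)r_n^d\to 0,\chi,\infty$ (resp.\ $ng(t_n)q_n$) that fix $\tau_{k,n}$ and $\upsilon_{k,n}$ should appear explicitly. Finally, if you keep the chaos route, note that fourth-moment-type criteria on Poisson space are delicate and that your Cram\'er--Wold kernels need not be nonnegative when some $a_i<0$, so you should rely on bounds valid without sign assumptions.
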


We prove these results, and establish bounds in the Kolmogorov distance, by applying inequalities derived in \cite{reitzner2013, schulte2016, eichelsbacher2014}. These aforementioned inequalities were ultimately proved using techniques that combined Malliavin Calculus on Poisson spaces and Stein's method, pioneered in \cite{peccati2010stein}. Connections between extreme value theory and these results are minimal. One notable exception is \cite{schulte2012}, where the authors prove a Poisson limit theorem for the values taken by the kernels of the $U$-statistics using the Malliavin-Stein method; this yielded asymptotic distributions for the order statistics of the values of the kernels. Other applications of these seminal probability distance bounds---outside an extreme value context---are cited in the monograph \cite{lastpen}.

Studies of extremal behavior of components of random vectors yield that light tails exhibit asymptotic independence \cite{balkema2010} and that regularly varying tails exhibit asymptotic dependence \cite{balkema2010, hult_lindskog}. For the class of densities we examine in this paper, \cite{balkema2010} demonstrates that if a random vector $X$ has a positive density $f$ with a light tail, then $\inp{\theta_1}{X}$ and $\inp{\theta_2}{X}$ are asymptotically independent, and that for some constants $L_1, L_2 > 0$,
\[
\Big | \P\big( \inp{\theta_1}{X} \geq L_1 t_n, \inp{\theta_2}{X} \geq L_2 t_n\big) - \P\big( \inp{\theta_1}{X} \geq L_1 t_n\big)\P\big( \inp{\theta_2}{X} \geq L_2 t_n\big) \Big| \to 0, \quad n \to \infty.
\]
Equivalently, we have that
\[
\Big | \P\big(X \in H_n(\theta_1) \cap H_n(\theta_2)\big) - \P\big( X \in H_n(\theta_1) \big)\P\big( X \in H_n(\theta_2) \big) \Big| \to 0. 
\]
What we are able to demonstrate is similar in spirit to the above, except for $U$-statistics $S_{k,n}(\theta)$ of the restricted point process $\Pn \cap H_n(\theta)$.
\begin{corollary}\label{c:asymp_ind}
For $f$ a light-tailed density, and any $\theta_1, \theta_2 \in S^{d-1}$, if we have a sequence $t_n \to \infty$ such that $S_{k,n}(\theta)$ obeys a central limit theorem as in Theorems~\ref{t:fidi_bigtail} and \ref{t:fidi_litetail}, then for any $s_1, s_2 \geq 0$
\[
\Big | \P\big( S_{k,n}(\theta_1) \leq s_{1,n}, S_{k,n}(\theta_2) \leq s_{2,n} \big) - \P(S_{k,n}(\theta_1) \leq s_{1,n})\P(S_{k,n}(\theta_2) \leq s_{2,n}\big) \Big | \to 0, \quad n \to \infty,
\]
where $s_{i,n} := s_i\sqrt{\Var(S_{k,n}(\theta_i))} + \E[S_{k,n}(\theta_i)]$, $i=1,2$. 
\end{corollary}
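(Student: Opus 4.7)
The plan is to derive the corollary as an immediate consequence of the joint finite-dimensional central limit theorem in Theorem~\ref{t:main}. In the light-tailed regime the limiting field $W$ is white noise, so the coordinates of the bivariate limit are independent; once independence is in hand at the limit, the asymptotic factorization of the joint CDF follows from the portmanteau theorem applied to the (continuous) Gaussian limit.

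First, I would set $Y_{i,n} := (S_{k,n}(\theta_i) - \E[S_{k,n}(\theta_i)])/\sqrt{\Var(S_{k,n}(\theta_i))}$ for $i = 1, 2$, so that $\{S_{k,n}(\theta_i) \leq s_{i,n}\} = \{Y_{i,n} \leq s_i\}$ by the definition of $s_{i,n}$. Applying the light-tailed half of Theorem~\ref{t:main} with $m = 2$ gives
\[
\tau_{k,n}^{-1/2}\big(S_{k,n}(\theta_1) - \E[S_{k,n}(\theta_1)],\, S_{k,n}(\theta_2) - \E[S_{k,n}(\theta_2)]\big) \Rightarrow \big(W(\theta_1), W(\theta_2)\big),
\]
with $W(\theta_1), W(\theta_2)$ independent centered Gaussians of variances $\Var(W(\theta_i)) > 0$. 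Since the normalization $\tau_{k,n}$ supplied by the Malliavin--Stein approach satisfies $\tau_{k,n}^{-1}\Var(S_{k,n}(\theta_i)) \to \Var(W(\theta_i))$, Slutsky's theorem upgrades the display to $(Y_{1,n}, Y_{2,n}) \Rightarrow (N_1, N_2)$, where $N_1, N_2$ are independent standard normals.

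Next, because the distribution of $(N_1, N_2)$ is absolutely continuous on $\R^2$, every $(s_1, s_2)$ is a continuity point of its joint CDF, so the portmanteau theorem yields
\[
\P(Y_{1,n} \leq s_1,\, Y_{2,n} \leq s_2) \to \Phi(s_1)\Phi(s_2), \qquad \P(Y_{i,n} \leq s_i) \to \Phi(s_i), \quad i = 1, 2,
\]
where $\Phi$ denotes the standard normal CDF. Taking the difference of the joint probability and the product of the marginals gives the desired convergence to $0$.

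The main obstacle I foresee is the variance convergence $\tau_{k,n}^{-1}\Var(S_{k,n}(\theta_i)) \to \Var(W(\theta_i))$ invoked for Slutsky's theorem, as weak convergence alone does not deliver convergence of second moments. In the Malliavin--Stein framework the normalization $\tau_{k,n}$ is typically engineered so that this ratio equals the limit variance up to asymptotically negligible terms, making the step automatic; if that is not already packaged into the proof of Theorem~\ref{t:main}, uniform integrability of $Y_{i,n}^2$ can be verified from the closed form Poisson $U$-statistic variance as a sum of squared $L^2$-norms of Malliavin derivatives.
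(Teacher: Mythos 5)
Your proposal is correct and follows essentially the same route as the paper: standardize each $S_{k,n}(\theta_i)$ by its own mean and standard deviation, invoke the joint CLT (Theorems~\ref{t:fidi_bigtail}/\ref{t:fidi_litetail}) so the bivariate standardized vector converges to a pair of independent standard normals, and conclude via convergence of the joint and marginal distribution functions at the continuity points $(s_1,s_2)$. The variance-convergence step you flag as a potential obstacle is already supplied by the moment asymptotics (Propositions~\ref{p:mom_conv_exp} and \ref{p:mom_conv_exp_lite}) together with the assumed positivity of $\mathcal{I}_{k,\ell}$, so no separate uniform-integrability argument is needed.
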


In addition to Theorem~\ref{t:main} and Corollary~\ref{c:asymp_ind}, we establish, via our proof method for the CLTs of bounding the Kolmogorov distance, that convergence to normal of $S_{k,n}(\theta)$ is much quicker when the $\Poi(n)$ points of $\Pn$ are \emph{conditioned} to lie in $H_n(\theta)$. Though this is an essentially a corollary of the moment asymptotics and the central limit theorems, it captures the intuition behind Figure~\ref{f:cond_conv}, where lighter tailed densities with the same level set shape exhibit greater clustering behavior when conditioned to lie on the same halfspace $H_n(\theta)$. Finally,  in Proposition~\ref{p:mom_conv_exp_lite}, we recover a ``loss of dimension'' phenomenon known to exist for Gaussian joint survival probabilities \cite{hashorva2005, hashorva2019}. 

\begin{figure}
\includegraphics[width=4.5in]{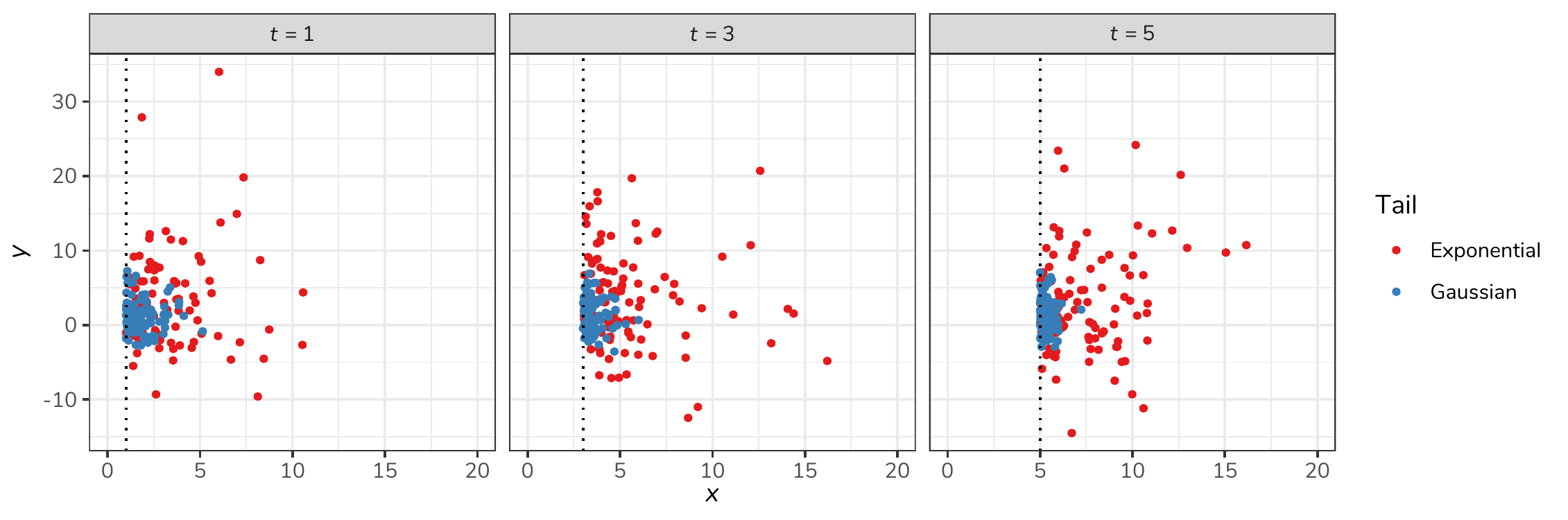}
\caption{Two independent Poisson processes $\mathcal{P}^0_{100}$ and $\mathcal{P}^1_{100}$ with intensities proportional to $ne^{-\gamma(x)}$ and $ne^{-\gamma(x)^2/2}$, i.e. having exponential and Gaussian tails, respectively. Points are conditioned to lie in the halfspaces $tH$ for $t = 1, 3, 5$. }
\label{f:cond_conv}.
\end{figure}

Before continuing, we will outline the structure of the paper. In Section~\ref{s:setup}, we spend a fair bit of time defining the notions in convexity, point process functionals, and multivariate extreme value theory that we need to make our results intelligible. Section~\ref{s:mom_conv} holds all of the precise moment asymptotics for both classes of densities. Section~\ref{s:fidi} contains in it the proofs of the main results of the paper, in particular the proofs of the Theorems mentioned above. In Section~\ref{s:cond}, we give a quantitative statement of the behavior of the ``reversed'' rate of convergence in the conditional case. 

\section{Setup}\label{s:setup}

In this section, we introduce all the prerequisites for this article. To begin, we introduce the necessary concepts in convexity. We begin with a brief lemma which describes a type of ``support halfspace'', which diverges and on which we will examine the clustering behavior of the local $U$-statistics. Let $\inp{\cdot}{\cdot}$ denote the Euclidean inner product and $\norm{x} := \sqrt{\inp{x}{x}}$ the Euclidean norm. Our first result is an important analogue of the support hyperplane theorem.
\begin{lemma} \label{l:angle_hs}
Let $D$ be a convex, bounded open subset of $\R^d$ which contains the origin. Then for every $\theta \in S^{d-1}$ there is a unique level $L > 0$ such that the closed halfspace 
\begin{equation} \label{e:main_hs}
H(\theta) = \{ x \in \R^d: \inp{\theta}{x} \geq L\}
\end{equation}
satisfies (i) $\partial D \cap H(\theta) \neq \emptyset$ and (ii) $D \subset H(\theta)^c$. 
\end{lemma}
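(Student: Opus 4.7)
The plan is to take $L := \sup_{x \in D}\inp{\theta}{x}$ and verify that this choice works and is unique. Since $D$ is bounded, $\bar D$ is compact, and by continuity $L = \sup_{x \in \bar D}\inp{\theta}{x} < \infty$. Positivity of $L$ follows from the fact that $D$ is open and contains the origin: some ball $B(0,\epsilon)\subset D$, so the point $(\epsilon/2)\theta$ lies in $D$, yielding $L \geq \epsilon/2 > 0$.

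For condition (i), I would use compactness of $\bar D$ to obtain a maximizer $x^* \in \bar D$ of $\inp{\theta}{\cdot}$, so $\inp{\theta}{x^*}=L$ and hence $x^* \in H(\theta)$. To place $x^*$ on the boundary, note that if $x^* \in D$ then openness of $D$ would give a small ball $B(x^*,\delta)\subset D$, on which $\inp{\theta}{x^* + (\delta/2)\theta} = L + \delta/2 > L$, contradicting maximality. Thus $x^* \in \bar D \setminus D = \partial D$, giving $x^* \in \partial D \cap H(\theta)$. For condition (ii), observe that any $y \in D$ satisfies $\inp{\theta}{y}\leq L$ by definition of the supremum; if equality held, the same openness argument as for $x^*$ would produce a point of $D$ with inner product strictly greater than $L$, a contradiction. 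Hence $\inp{\theta}{y}<L$, i.e.\ $y \in H(\theta)^c$.

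For uniqueness, I would argue by ruling out both directions. If $L' > L$, then every $x \in \bar D$ has $\inp{\theta}{x}\leq L < L'$, so $\bar D \cap H(\theta)=\emptyset$ with this larger level, and in particular $\partial D \cap H(\theta)=\emptyset$, violating (i). If $L' < L$, then because $x^* \in \partial D$ is a limit of points $y_n \in D$, continuity gives $\inp{\theta}{y_n} \to L > L'$, so eventually $y_n$ lies in the halfspace defined by $L'$; since $y_n \in D$, this contradicts (ii). Hence $L$ is the only admissible level.

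There is no real obstacle here: the lemma is essentially the supporting hyperplane theorem adapted to a fixed outward direction $\theta$, and the only subtlety is distinguishing strict from non-strict inequalities at the boundary, which is handled by repeatedly invoking that $D$ is open.
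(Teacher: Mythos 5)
Your proof is correct and follows essentially the route the paper intends: the paper gives no detailed argument but identifies the level as $L = \zeta_D(\theta) = \sup\{\inp{x}{\theta} : x \in D\}$, which is exactly your choice, and your compactness/openness arguments for (i), (ii), and uniqueness are the standard way to verify it. (Note that convexity of $D$ is never actually needed for this particular statement, which your argument implicitly confirms.)
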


We deem the halfspace $H(\theta)$ in Lemma~\ref{l:angle_hs} an \emph{outer support halfspace} at angle $\theta \in S^{d-1}$. Furthermore, $L = \zeta_D(\theta)$, where $\zeta_D$ is the \emph{support function} of $D$. The support function $\zeta_D: \R^d \to \R$ of $D$ is defined as 
\[
\zeta_D(u) := \sup \{ \inp{x}{u}: x \in D\}.
\]
The support function and another useful function---the \emph{gauge function}---are in some sense dual \cite{schneider2014, hug2020lectures}. We now define the gauge function, which generalizes norms in describing the level sets of densities.

\begin{definition} \label{d:gauge_func}
Given a convex set $D \subset \R^d$ containing the origin, the \emph{gauge function} $\gamma_D: \R^d \to [0, \infty]$ of $D$ is defined by 
\[
\gamma_D(x) := \inf\{\alpha \geq 0: x \in \alpha D\}.
\]
The gauge function $\gamma_D$ is a convex function and \emph{positively homogeneous of degree 1}, meaning that 
\[
\gamma_D(\alpha x) = \alpha \gamma_D(x), \quad \text{for all} \ x \in \R^d, \ \alpha \geq 0.
\]
\end{definition}

The gauge function $\gamma_D$ has further important properties when $D$ is open and bounded in addition to being convex and containing the origin. In the case that $D$ is an open ellipse containing the origin, i.e. $D = L\big(B(0,r)\big)$, where $B(0,r) = \{x \in \R^d: \norm{x}_p < r\}$, $r > 0$, for some $\ell^p$-norm $\norm{\cdot}_p$, $p \in (2,\infty)$, on $\R^d$ and $L$ an invertible linear transformation, then $\gamma_D(x) = \lVert L^{-1}(x/r) \rVert_p$. This can be seen from the fairly standard Lemma~\ref{l:gauge_prop} below, which follows from the material of chapter 2 in \cite{hug2020lectures}.

\begin{lemma} \label{l:gauge_prop}
If $D$ is a bounded open convex subset of $\R^d$ which contains the origin. Then $\gamma_D$ satisfies:
\begin{enumerate}
\item $\gamma_D$ is finite and continuous,
\item $D = \{ \gamma_D < 1 \}$ and $\bar{D} = \{ \gamma_D \leq 1 \}$,
\item $\gamma_D(x) = 0$ if and only if $x = 0$.
\end{enumerate}
\end{lemma}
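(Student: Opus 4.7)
The plan is to dispatch items (1), (3), and (2) in an order that chains naturally, and to obtain continuity at the end as a free corollary of convexity. For finiteness, I would exploit that $D$ is open and contains $0$: pick $r>0$ with $B(0,r)\subset D$, so for any $x\in\R^d$ and any $\alpha\ge\|x\|/r$ one has $x/\alpha\in B(0,r)\subset D$, giving $x\in\alpha D$ and hence $\gamma_D(x)\le\|x\|/r<\infty$. The forward direction of (3) is immediate since $0\in D$ implies $0\in\alpha D$ for every $\alpha>0$, forcing $\gamma_D(0)=0$. The converse uses boundedness: choosing $R$ with $D\subset B(0,R)$, if $\gamma_D(x)=0$ then $x\in\alpha D\subset B(0,\alpha R)$ for every $\alpha>0$, so $\|x\|<\alpha R$ for all such $\alpha$ and thus $x=0$.

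For item (2), I would first show $D=\{\gamma_D<1\}$. If $x\in D$, openness yields $\delta>0$ with $(1+\delta)x\in D$, so $x\in\tfrac{1}{1+\delta}D$ and $\gamma_D(x)\le\tfrac{1}{1+\delta}<1$. Conversely, $\gamma_D(x)<1$ produces $\alpha\in(0,1)$ and $y\in D$ with $x=\alpha y$, and convexity of $D$ together with $0\in D$ gives $x=\alpha y+(1-\alpha)\cdot 0\in D$. The characterization $\bar D=\{\gamma_D\le 1\}$ is the subtler half and uses the standard interior--closure fact for convex sets: given $x\in\bar D$, the point $\tfrac{1}{1+\epsilon}x+\tfrac{\epsilon}{1+\epsilon}\cdot 0$ lies in $\operatorname{int}D=D$ for each $\epsilon>0$, so $x\in(1+\epsilon)D$ and $\gamma_D(x)\le 1+\epsilon$, whence $\gamma_D(x)\le 1$. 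For the reverse inclusion, $\gamma_D(x)\le 1$ forces $x\in(1+\epsilon)D$ for each $\epsilon>0$, and letting $\epsilon\to 0$ exhibits $x$ as a limit of points $x/(1+\epsilon)\in D$.

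Finally, for continuity, I would verify that $\gamma_D$ is convex and invoke the classical result that any finite convex function on $\R^d$ is continuous. Positive homogeneity is built into the definition. Subadditivity uses convexity of $D$: given $x=\alpha u$, $y=\beta v$ with $u,v\in D$ and $\alpha,\beta>0$,
\[
x+y=(\alpha+\beta)\Bigl(\tfrac{\alpha}{\alpha+\beta}u+\tfrac{\beta}{\alpha+\beta}v\Bigr)\in(\alpha+\beta)D,
\]
so $\gamma_D(x+y)\le\alpha+\beta$, and taking infima gives $\gamma_D(x+y)\le\gamma_D(x)+\gamma_D(y)$; the degenerate case where one of $\gamma_D(x),\gamma_D(y)$ vanishes is handled by part (3) already proven. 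The only step I foresee any real subtlety in is the ``$\bar D\subseteq\{\gamma_D\le 1\}$'' direction of (2), because it hinges on the convex interior--closure interaction, but this is classical material \cite{hug2020lectures}, so the argument should go through cleanly.
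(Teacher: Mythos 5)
Your proof is correct, and since the paper itself offers no argument for this lemma---it simply defers to the standard material in chapter 2 of \cite{hug2020lectures}---your write-up is essentially the classical textbook proof that the citation points to (sublinearity plus finiteness giving continuity, openness/convexity giving the level-set identities). The only cosmetic points worth tightening are that in (1) you need $\alpha > \norm{x}/r$ strictly (equality puts $x/\alpha$ on the boundary of the ball) before passing to the infimum, and in (3) the step from $\gamma_D(x)=0$ to ``$x \in \alpha D$ for every $\alpha>0$'' implicitly uses that $\alpha \mapsto \alpha D$ is increasing, which holds because $0 \in D$ and $D$ is convex.
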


Typically, we fix a set $D$ in the background so we often denote the gauge function $\gamma_D$ as $\gamma$. We can now introduce the concept of a rotund set $D$ \cite{balkema2004, balkema2007}. 
\begin{definition}\label{d:rotund}
Let $D$ be an open, bounded, \emph{strictly} convex set containing the origin. Then $D$ is \emph{rotund} if
\begin{enumerate}
\item The gauge function of $D$, $\gamma: \R^d \to [0, \infty]$ is $C^2$ outside of the origin,
\item The Hessian of $\gamma^2$ is positive definite outside the origin.
\end{enumerate}
\end{definition}

It can be shown by standard multivariable calculus methods, that if $D = L(B(0,r))$, where $L$ is any invertible linear transformation and $B(0, r)$ an open $\ell^p$ ball, $p \in (2,\infty)$, with radius $r > 0$, then $D$ is a rotund set. That $D$ is also strictly convex follows from Minkowski's inequality for norms. \\

As any rotund set $D$ is strictly convex, $\partial D \cap H(\theta)$ consists of a single point $p$ for each $\theta \in S^{d-1}$. The point $\{p\} = \partial D \cap H(\theta)$ satisfies the equation $p = \nabla \zeta_D(\theta)$, i.e. the gradient of the support function at $\theta$ \cite[see Corollary 1.7.3]{schneider2014}. Additionally, that $\gamma$ is differentiable implies that $D$ is also \emph{smooth}; that is, the family of sets $\{ \partial D \cap H(\theta), \, \theta \in S^{d-1}\}$ are disjoint. Let us represent a point $x \in \R^d$ as $x = (u,v)$ where $u \in \R^{d-1}$ and $v \in \R$ is the vertical coordinate. Let $\basd := (0, 1) \in \R^d$ represent the point where $u = 0$ and $v=1$. For any rotund set $D$ and any angle $\theta \in S^{d-1}$, there exists an invertible linear transformation $A = A_\theta$ called an \emph{initial transformation} such that   
\begin{equation} \label{e:essprop_init}
\gamma\big(A(u, 1)\big) - \gamma\big(A(\basd)\big) \sim \norm{u}^2/2, \quad u \to 0,
\end{equation}
The concept of an initial transformation originated in \cite{balkema2004, balkema2007}. The inverse of an initial transformation brings $D$ into \emph{correct initial position}, following the definition in the same two articles. Formally, an initial transformation $A$ satisfiies
\begin{enumerate}
\item $A(\basd) = \nabla \zeta_D(\theta) =: p(\theta)$,
\item $A\big(\{x = (u,v) \in \R^d: v \geq 1\}\big) = H(\theta)$,
\item The set $A^{-1}(D)$ is in correct initial position---i.e. \eqref{e:essprop_init} holds.
\end{enumerate}

Additionally, there are two more facts worth noting. By the assumed invertibility of $A$ we have that $\basd \in \partial A^{-1}(D)$, just as $p(\theta) \in \partial D$. Let $\gamma_A$ be the the gauge function associated to $A^{-1}(D)$. It is straightforward to show that $\gamma_A = \gamma \circ A$. Note that if $x \in H$, then $x \not \in D$, so that $\gamma(x) \geq 1$. Additionally, recall that $A^{-1}(H) = \{x \in \R^d: \inp{\mathbf{e}_d}{x} \geq 1\}$. Therefore, if $x \in A^{-1}(H)$ then $\gamma_A(x) \geq 1$, so $A^{-1}(H)$ is the outer support halfspace at angle $\basd$ for $A^{-1}(D)$. In general, for any angle $\theta \in S^{d-1}$ and $t > 0$ we have that 
\begin{equation} \label{e:inp_gamt}
\text{if } \, x \in tH \, \text{ then } \,  \gamma(x) \geq t.
\end{equation}
by the positive homogeneity of the gauge function $\gamma$. 


\begin{example} \label{e:init_trans}
The initial transformations we have in correspond to those 9.3 \cite{balkema2007}, where existence is demonstrated for rotund sets. Here we illustrate some of the constituent transformations of $A$. The inverse of $A$ consists of the composition of a rotation $R$, such that $H$ becomes a closed halfspace of the form 
\[
R(H) = \{ x\in \R^d: v \geq L\},
\]
where $R(p) = (a, L)$; a vertical multiplication $V: (u, v) \mapsto (u, v/L)$; a shear mapping $S: (u, v) \mapsto (u -av, v)$; and finally the composition of a rotation and a scaling acting on the horizontal coordinate $u = (x_1, \dots, x_{d-1})$. Such an initial transformation $A$ can be seen to satisfy $\det(A) = z_D(\theta) =  (\lambda_1\cdots\lambda_{d-1})^{-1/2}\zeta_D(\theta)$, where $\lambda_i$ are the eigenvalues of the Hessian of $\gamma_{SVR(D)}^2/2$ at (0,1) with the $d^{th}$ row and column deleted. You can see some of the respective linear transformations that go into an initial transformation of a rotund set with gauge function  
\[
\gamma_{\text{egg}}(x,y) =
\begin{cases}
\left( \frac{(x^2+y^2)}{(12*\sin^2(\arctan(y/2x)) + 4)}\right)^{1/2} &\mbox{ if } y \geq 0 \\ \vspace{5.5pt}
\frac{(x^2+y^2)^{1/2}}{2} &\mbox{ if } y < 0. 
\end{cases}
\]
in $\R^2$ in Figure~\ref{f:init_trans}.

\begin{figure}[h]
	\includegraphics[width=1.1in]{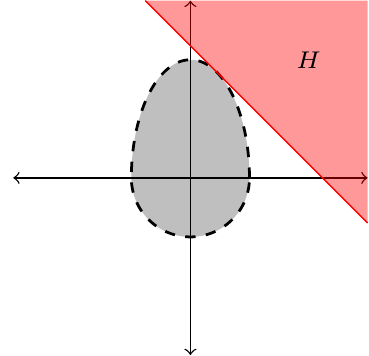}
	\hfill
	\includegraphics[width=1.1in]{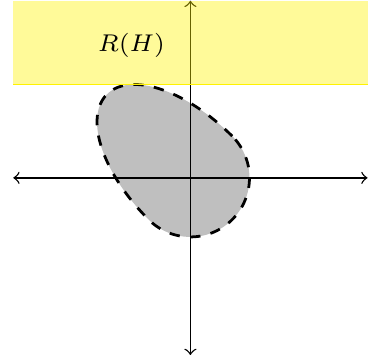}
	\hfill
	\includegraphics[width=1.1in]{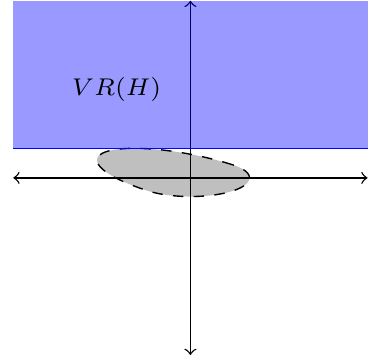}
	\hfill
	\includegraphics[width=1.1in]{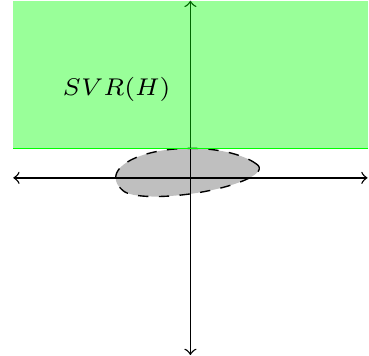}
	\caption{Constituent transformations in an initial transformation of the rotund, egg-shaped, set on the far left with gauge function $\gamma_{\text{egg}}$}
	\label{f:init_trans}
\end{figure}
\end{example}

\subsection{The class of functionals}\label{ss:clf}

We confine our study here to counts of a certain class of local $U$-statistics that represent counts of Euclidean point configurations. We define the diameter of a subset $A \subset \R^d$ to be $\diam(A) := \sup_{x,y \in A} \lVert x-y \rVert$. Let $\X^k_{\neq}$ be the set of all $k$-tuples $(x_1, \dots, x_k)$ of elements of $\X$ such that $x_i \neq x_j$ for $i \neq j$. Our objects of interest are the functionals
\[
S_k(\X, r) := \frac{1}{(k+1)!}\sum_{(x_0, \dots, x_k) \in \X^{k+1}_{\neq}} h^k_r(x_0, \dots, x_k),
\]
where $h^k_r: (\R^d)^{k+1} \to [0, \infty)$ is the \emph{kernel} of the $U$-statistic $S_k$, i.e. a symmetric measurable map that satisfies for every $r \geq 0$: 
\begin{enumerate}[label=(H{\arabic*})]
\item $h^k_r$ is translation invariant --- i.e., for $y \in \R^d$, we have \\ $h^k_r(x_0+y, \dots, x_k+y)=h^k_t(x_0, \dots, x_k)$,
\item $h^k_r$ is locally determined --- i.e., there exists a $\kappa$ so that if \\ $h^k_1(x_0, \dots, x_k) > 0$ then $\diam(\{x_0, \dots, x_k\}) \leq \kappa$,
\item $h^k_r(sx_0, \dots, sx_k) = h^k_{r/s}(x_0, \dots, x_k)$ for all $s > 0$,
\item $h^k_r$ is uniformly bounded, so that there exists an $M < \infty$ with $h^k_r(x_0, \dots, x_k) \leq M$. 
\end{enumerate}
It is clear to see that these kernels are closed under finite linear combinations, i.e. $\sum_{i=1} a_i h^k_t$ satisfies (H1)--(H4). These conditions are very similar to the conditions outlined in \cite{owada_crackle}. Note that as a result of the definition of $S_k(\X, r)$ above, we have 
\[
S_k(\X, r) := \sum_{\substack{ \Y \subset \X, \\ |\Y| = k+1}} h^k_r(\Y),
\]
as well. On a final note, define 
\begin{equation} \label{e:co_k}
c^0_k := h^k_1(0, \dots, 0).
\end{equation}

The definition of $S_k(\X, r)$ above mirrors that of \cite{thomas_owada2} fairly closely. Examples of these functionals include simplex counts in the {\v C}ech and Vietoris-Rips complexes as in \cite{thomas_owada2} and induced subgraph counts of \cite{penr} and non-induced subgraph counts of \cite{bachmann2018}. Let us finish this section by defining  $H_n = H_n(\theta) := t_n H$, where $t_n \to \infty$ as $n \to \infty$ and for every $\theta \in S^{d-1}$, $H = H(\theta)$ is the halfspace specified by \eqref{e:main_hs}.  Then, we have quantity $S_{k,n}(\theta) := S_k\big(\Pn \cap H_n, r_n\big)$, where $\Pn$ is the Poisson process with intensity $n f$, and $(r_n)_{n \geq 1}$ is positive and $r_n \to r \in [0, \infty)$ as $n \to \infty$. If $r_n$ is constant, then we may drop condition (H3) from the requirement on the kernel $h^k_r$. In this article we also consider Poisson processes with intensities $nf_n$, where $f_n$ is density $f$ conditioned on $H_n(\theta)$. In other words, we consider the conditional quantity $S_{k,n}^*(\theta) := S_k\big(\Pn^\theta, r_n\big)$, where $\Pn^\theta$ has intensity $nf_n$, and is $f_n$ defined by
\[
f_n(x) := \frac{f(x)\ind{x \in H_n}}{\int_{H_n} f(x)\dif{x}}.
\]
In principle, we could let $H_n(\theta) = t_n(\theta)H$, where $\{t_n(\theta)\}_{\theta \in S^{d-1}}$ satisfies $\inf_{\theta} t_n(\theta) \to \infty$; however, with the added generality of the rotund level sets and already heavy notational burden, we omit this case for now. Such an approach would have relevance in the study of asymptotic independence. For example, if $X$ is a random vector on $\R^d$ having density $f$, we may want to restrict $\inp{\theta}{X}$ to exceed its $1-1/n$ quantile. Here, $f$ is a probability density on $\R^d$ such that
\[
f(x) := g(\gamma(x)),
\]
and $\gamma = \gamma_D$ is the gauge function associated to a bounded open convex set containing the origin and $g: [0, \infty) \to (0, \infty)$ is the \emph{density generator}, following \cite{balkema2010}. If $g$ is continuous and nonincreasing, then $\{g > c\} = [0, \alpha)$ for some $\alpha > 0$, and by Lemma~\ref{l:gauge_prop}, we have  
\begin{equation} \label{e:homothetic}
\{ f > c \} = \gamma^{-1}\big( \{ g > c \} \big) = \alpha D,
\end{equation}
in which case we say $f$ is a homothetic density. We always require that $g$ be positive over its domain, which we can take here to be $[0, \infty)$. Let $\rho$ be the measure on $\R^d$ associated with the density $f$. Finally, define $f_A := f \circ A = g \circ \gamma_A$. Per usual, for two positive sequences $(a_n)_{n \geq 1}$ and $(b_n)_{n \geq 1}$ let $a_n \sim b_n$ denote
\[
\lim_{n \to \infty} a_n/b_n = 1;
\]
let $a_n = O(b_n)$ denote that there exists an $N$ and a positive constant $c > 0$ such that for $n \geq N$, $a_n \leq cb_n$; let $a_n = \Theta(b_n)$ denote $a_n = O(b_n)$ and $1/a_n = O(1/b_n)$; finally, let $a_n = o(b_n)$ denote that $a_n/b_n \to 0$ as $n \to \infty$. To ease the notational burden, we will often denote a generic positive constant as $C^*$, and allow it to vary between lines. 

%
\subsection{The class of distributions}

\subsubsection{Light tails}

The first class of distributions on $\R^d$ which we consider in this paper consist of a class of homothetic, light-tailed densities (consult \cite{balkema2010} for a in-depth study). Consider a density $f$ of the form above with $g(x) := C\exp(-\psi(x))$ where $\psi$ is a von Mises function. Namely, $\psi$ is assumed to be twice continuously differentiable, such that 
\begin{itemize}
\item $\psi^\prime (t)>0$ for all $t>0$
\item $\psi(t) \to \infty$ as $t \to \infty$
\item $(1/\psi')'(t) \to 0$, as $t \to \infty$. 
\end{itemize}
Therefore, as $g$ is continuous and decreasing, $f$ satisfies \eqref{e:homothetic}, so that $f$ is a homothetic density. We will frequently refer to the class of densities $f$ as densities with an \emph{exponentially-decaying tail}. Additionally, $f$ is \emph{light-tailed} because $g$ is rapidly varying in the sense of 
\[
\lim_{t \to \infty} \frac{g(tx)}{g(t)} = 
\begin{cases}
\infty &\mbox{ if } 0 < x < 1, \\
0 &\mbox{ if } x > 1. 
\end{cases}
\]
Under this setup, let $a(t):= 1/\psi'(t)$ and suppose that $\lim_{n \to \infty} a(t_n) = \xi \in [0, \infty]$. We can show that $g$ is rapidly varying by noting that $a(t)/t \to 0$ as $t \to \infty$, which follows from $(1/\psi')'(t) \to 0$ in the case that $\xi = \infty$. Continuing on, let us define $\varphi := \psi \circ \gamma$ for any von Mises function $\psi$ and gauge function $\gamma$, and denote similarly $\varphi_A := \psi \circ \gamma_A$. We offer the following lemma to establish the asymptotic behavior of $\varphi$. It can be shown that such $g$ lie in the Gumbel max-domain of attraction \cite[][cf. Proposition 6.5]{balkema2007}. 

\begin{lemma} \label{l:phi_diffp911}
For a density $f$ with exponentially-decaying tail and $\xi > 0$ and for a gauge function $\gamma$ of a rotund set in correct initial position,
\begin{equation} \label{e:phi_conv}
\varphi(\alpha_n(x) + y) - \varphi(\alpha_n(0)) \sim \norm{u}^2/2 + v + \xi^{-1} \inp{\nabla \gamma( \basd)}{y},
\end{equation}
almost everywhere for $(x, y) \in \R^d \times \R^d$ where $\alpha_n$ the affine transformation defined by 
\[
\alpha_n(u, v) = \big( \sqrt{t_n a(t_n)} u, t_n + a(t_n)v\big). 
\]
Also, the convergence in \eqref{e:phi_conv} is uniform in $x$ or $y$ if they are contained in a bounded subset of $\R^d$. 
\end{lemma}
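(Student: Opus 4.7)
The plan is to expand $\gamma$ to second order at $\basd$, feed the result into $\psi$, and invoke the von Mises property of $\psi$ at the appropriate scale. Since $D$ is in correct initial position, $\basd \in \partial D$ and positive homogeneity gives $\gamma(\alpha_n(0)) = \gamma(t_n \basd) = t_n$, hence $\varphi(\alpha_n(0)) = \psi(t_n)$. Writing $y = (y_h, y_d) \in \R^{d-1} \times \R$ and factoring $t_n$ out of the argument yields $\gamma(\alpha_n(x)+y) = t_n\, \gamma(\basd + \delta_n)$ with
\[
\delta_n = \bigl(\sqrt{a(t_n)/t_n}\,u + y_h/t_n,\ a(t_n) v/t_n + y_d/t_n\bigr),
\]
so $\delta_n \to 0$, uniformly on bounded $(x,y)$, because $a(t_n)/t_n \to 0$ (from $(1/\psi')'(t)\to 0$).

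Next I would pin down the local form of $\gamma$ at $\basd$. Because $\gamma$ is $C^2$ away from the origin by rotundness, Taylor's theorem applies; Euler's identity for the degree-one homogeneous function $\gamma$ gives $\inp{\nabla\gamma(\basd)}{\basd}=1$, so the $d$th coordinate of $\nabla\gamma(\basd)$ equals $1$. Plugging $(u,1)$ into a first-order Taylor expansion and comparing with \eqref{e:essprop_init}, applied to $A=I$ since $D$ is already in correct initial position, forces the horizontal coordinates of $\nabla\gamma(\basd)$ to vanish, so $\nabla\gamma(\basd) = \basd$. Combined with positive homogeneity this yields the second-order expansion
\[
\gamma(w, 1+z) = 1 + z + \tfrac{1}{2}\norm{w}^2 + o\bigl(\norm{w}^2 + z^2\bigr)
\]
as $(w,z)\to 0$, with the remainder uniform on bounded sets. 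Substituting $\delta_n$ and multiplying through by $t_n$, the cross-term $\sqrt{a(t_n)/t_n}\inp{u}{y_h}$ and the scaled remainder $t_n\cdot o(\norm{\delta_n}^2)$ both tend to zero because $a(t_n)/t_n \to 0$, leaving
\[
\gamma(\alpha_n(x)+y) - t_n = a(t_n)\bigl(v + \tfrac{1}{2}\norm{u}^2\bigr) + y_d + o(a(t_n)).
\]

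To conclude I would write this displacement as $a(t_n)\,w_n$ where $w_n \to v + \tfrac{1}{2}\norm{u}^2 + y_d/\xi$, with the convention $1/\xi := 0$ when $\xi = \infty$, and then apply the uniform form of the von Mises property $\psi(t + a(t)s) - \psi(t) \to s$, valid uniformly for $s$ in compact sets, to obtain $\varphi(\alpha_n(x)+y) - \varphi(\alpha_n(0)) \to \tfrac{1}{2}\norm{u}^2 + v + \xi^{-1}\inp{\nabla\gamma(\basd)}{y}$, using the identification $\inp{\nabla\gamma(\basd)}{y}=y_d$. Uniformity on bounded $(x,y)$ follows because both the Taylor expansion of $\gamma$ at $\basd$ and the von Mises limit are uniform on compacts; the ``almost everywhere'' qualification simply excludes the codimension-one set on which the limit vanishes and the $\sim$ symbol would be vacuous. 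The main obstacle is the calibration of the three distinct scales $t_n$, $\sqrt{t_n\,a(t_n)}$, and $a(t_n)$ baked into $\alpha_n$: one must verify that the $t_n$-rescaled quadratic contribution from $\delta_n^h$ reproduces exactly $a(t_n)\norm{u}^2/2$, that the linear term from $\delta_n^v$ delivers $a(t_n)v + y_d$, and that every remaining contribution is $o(a(t_n))$, so that the scaling of $\alpha_n$ matches exactly the exponential decay rate encoded in $\psi$.
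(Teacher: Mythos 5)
Your argument is correct, and it takes a genuinely different route from the paper. The paper outsources the quadratic part to Proposition 9.11 of Balkema--Embrechts: it reduces the lemma to checking that $\gamma(\alpha_n(x)+y)-\gamma(\alpha_n(x))\to\inp{\nabla\gamma(\basd)}{y}$ (a first-order Taylor step after normalizing by $t_n+a(t_n)v$) and that $a(t_n)/a(t^*_n(y))\to 1$ (mean value theorem plus $a'$ vanishing at infinity). You instead prove the full expansion from scratch: factor $t_n$ out of the argument, expand $\gamma$ to second order at $\basd$, identify $\nabla\gamma(\basd)=\basd$ and the Hessian structure from correct initial position together with homogeneity, and finish with the locally uniform von Mises limit $\psi(t+a(t)s)-\psi(t)\to s$, which is exactly the self-neglecting property the paper verifies by hand. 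Your route is more self-contained and makes the local geometry explicit (in particular $\inp{\nabla\gamma(\basd)}{y}=y_d$); the paper's is shorter because the $\norm{u}^2/2+v$ contribution is cited rather than derived. Two points are worth tightening. First, the claim $\gamma(w,1+z)=1+z+\tfrac12\norm{w}^2+o(\norm{w}^2+z^2)$ needs the mixed and vertical second derivatives at $\basd$ to vanish; this follows from Euler's relation for the degree-zero homogeneous $\nabla\gamma$, namely $\mathrm{Hess}\,\gamma(\basd)\,\basd=0$, and that step deserves to be spelled out since ``combined with positive homogeneity'' is doing real work there. Second, the cross term $\sqrt{a(t_n)/t_n}\,\inp{u}{y_h}$ and the rescaled Taylor remainder must be shown to be $o(a(t_n))$, not merely $o(1)$: this is where $\xi>0$ enters (so that $\liminf_n a(t_n)>0$ and $t_na(t_n)\to\infty$), and the justification ``because $a(t_n)/t_n\to 0$'' alone would not suffice if $\xi$ were $0$. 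With those two clarifications the proof is complete, and the uniformity statement for $x$ or $y$ in bounded sets follows as you say from the uniform smallness of $\delta_n$ and the local uniformity of the von Mises limit.
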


\begin{proof}
By Proposition 9.11 in \cite{balkema2007}, it suffices to show that 
\[
\frac{1}{a(t_n)} \bigg( \gamma(\alpha_n(x) + y) - \gamma(\alpha_n(x)) \bigg) \to \xi^{-1} \inp{\nabla \gamma( \basd)}{y}, 
\]
and that $a(t_n)/a(t^*_n(y)) \to 1$ as $n \to \infty$, where $t^*_n(y)$ a nonnegative real number between $\gamma(\alpha_n(x) + y)$ and $\gamma(\alpha_n(0))$. First, set $y_n := y/(t_n + a(t_n)v)$ and $x_n := \alpha_n(x)/(t_n + a(t_n)v)$. As $\gamma \in C^2$, Taylor's theorem \cite[cf. Theorem 2.7.2]{walschap2015}, yields that
\begin{equation} \label{e:gamma_alphn}
\gamma(x_n + y_n) - \gamma(x_n) = \inp{\nabla \gamma(x_n)}{y_n} + r_n(y_n),
\end{equation}
where 
\[
r_n(y_n) = \frac{1}{2} \sum_{i=1}^d \sum_{j=1}^d D_{ij}\gamma(x_n + a_n y_n) y_{n,i} y_{n,j},
\]
where $y_{n,i}$ is the $i^{th}$ coordinate of $y_n$, $D_{ij}\gamma$ are the (continuous) second partial derivatives of $\gamma$, and $a_n \in (0,1)$ depend on $x_n, y_n$. The quantity $x_n + a_n y_n \to \basd$, as $n \to \infty$, and $y_{n,i} y_{n,j}/\lVert y_n \rVert \to 0$ as well. Continuing on, we have that the limit of \eqref{e:gamma_alphn} multiplied by $t_n + a(t_n)v$ is equal to
\begin{align*}
\gamma(\alpha_n(x) + y) - \gamma(\alpha_n(x)) = \lVert y \rVert \Bigg[ \frac{\inp{\nabla \gamma(x_n)}{y_n}}{\lVert y_n\rVert} + \frac{r_n(y_n)}{\lVert y_n \rVert}\Bigg] \to \inp{\nabla \gamma(\basd)}{y},
\end{align*}
as $y_n/\lVert y_n \rVert = y/\lVert y \rVert$ and $\nabla\gamma$ is continuous. Uniformity of this convergence in $x$ or $y$ follows theorem if either $x$ and $y$ are contained in a bounded set. Second, to show $a(t_n)/a(t^*_n(y)) \to 1$, let us set $s_n(y) := (t^*_n(y) - t_n)/a(t_n)$ so that $t^*_n(y) = t_n+a(t_n)s_n(y)$. Now, it is straightforward to see that 
\begin{equation} \label{e:tny_tn}
|t_n^*(y) - t_n| \leq |\gamma(\alpha_n(x) + y) - \gamma(\alpha_n(0))|,
\end{equation}
where the righthand side of \eqref{e:tny_tn}, divided by $a(t_n)$, converges to 
\[
\Big \lvert \norm{u}^2/2 + v + \xi^{-1} \inp{\nabla \gamma( \basd)}{y}\Big \rvert.
\]
Therefore, $\{s_n(y)\}_{n \geq 1}$ is a bounded sequence for all $y \in \R^d$, and uniformly in $x$ or $y$ if $x = (u,v)$ or $y$ are restricted to a bounded set. In conclusion, the mean value theorem yields
\[
| a(t_n^*(y)) - a(t_n)|/a(t_n) \leq a'(t_n^{**})|s_n(y)| \to 0, \ n \to \infty,
\]
where $t_n^{**} \to \infty$ as $n \to \infty$ as it is between $t_n^*(y)$ and $t_n$ and $a'$ vanishes at infinity.
\end{proof}
\begin{proposition} \label{e:rho_Hn}
With $f, \xi, $ and $\gamma$ satisfying the assumption of Lemma~\ref{l:phi_diffp911}, we have 
\begin{align*}
\rho(H_n) &\sim (2\pi)^{(d-1)/2} |\det(A \circ \alpha_n)| f_A( \alpha_n(0))  \\
&= (2\pi )^{(d-1)/2} z_D(\theta) g(t_n) q_n,
\end{align*}
where $q_n := |\det(\alpha_n)| = (a(t_n)t_n)^{(d-1)/2}a(t_n)$.
\end{proposition}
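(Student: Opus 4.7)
The plan is to evaluate $\rho(H_n) = \int_{H_n} f(x)\,dx$ by two successive changes of variables that straighten $H_n$ into a half-space $\{v \geq t_n\}$ and then rescale to a neighborhood of the boundary, so the integrand converges pointwise to the product $\exp(-\norm{u}^2/2 - v)$ on $\R^{d-1}\times[0,\infty)$. First I would use the inverse initial transformation $A^{-1}$. Since $A^{-1}(H) = \{(u,v): v \geq 1\}$ by property (2) of the initial transformation, linearity gives $A^{-1}(H_n) = t_n A^{-1}(H) = \{(u,v) : v \geq t_n\}$, so
\[
\rho(H_n) = \abs{\det A} \int_{\{v \geq t_n\}} f_A(y)\,dy.
\]
Substituting $y = \alpha_n(w)$ with $w = (u,v)$ maps $\R^{d-1}\times[0,\infty)$ bijectively onto $\R^{d-1}\times[t_n,\infty)$, and $\abs{\det \alpha_n} = q_n$, yielding
\[
\rho(H_n) = \abs{\det(A \circ \alpha_n)}\, f_A(\alpha_n(0)) \int_{\R^{d-1}\times [0,\infty)} \exp\!\bigl(-[\varphi_A(\alpha_n(w)) - \varphi_A(\alpha_n(0))]\bigr)\,dw,
\]
where $f_A = C e^{-\varphi_A}$ and $\varphi_A = \psi\circ\gamma_A$.

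Because $A^{-1}(D)$ is in correct initial position, $\gamma_A$ is the gauge function of a rotund set satisfying \eqref{e:essprop_init}, so Lemma~\ref{l:phi_diffp911} applies to $\varphi_A$ with $x = w$ and $y = 0$, giving the pointwise convergence
\[
\varphi_A(\alpha_n(w)) - \varphi_A(\alpha_n(0)) \to \norm{u}^2/2 + v
\]
for every $w = (u,v) \in \R^{d-1}\times[0,\infty)$. I expect the main obstacle to be the interchange of limit and integral, which I would handle by dominated convergence. On compact sets the convergence above is uniform (by the uniformity clause in Lemma~\ref{l:phi_diffp911}), so a majorant is only needed on the tails $\norm{w}$ large; there, the convexity of $\gamma_A$ (bounding $\gamma_A(\alpha_n(w))$ below by a piecewise linear function of $w$), together with the rapid variation of $g = C e^{-\psi}$ and the asymptotics $a(t_n)/t_n \to 0$, should produce an $n$-independent integrable majorant of the form $\exp(-c\norm{u}^2 - cv)$ for some $c \in (0,1/2)$. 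A cleaner route is to isolate a ball of radius $R$ and handle the complement with a crude monotonicity bound on $\varphi_A$, letting $R \to \infty$ afterward.

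Granted this dominated convergence step, the integral converges to
\[
\int_{\R^{d-1}} e^{-\norm{u}^2/2}\,du \cdot \int_0^\infty e^{-v}\,dv = (2\pi)^{(d-1)/2},
\]
which gives the first asymptotic equivalence. To rewrite the prefactor explicitly, observe that $\alpha_n(0) = t_n \basd$, and since $\basd \in \partial A^{-1}(D)$ the positive homogeneity of $\gamma_A$ yields $\gamma_A(\alpha_n(0)) = t_n$, so $f_A(\alpha_n(0)) = g(t_n)$. Combined with $\abs{\det A} = z_D(\theta)$ recorded in Example~\ref{e:init_trans} and the direct computation $\abs{\det \alpha_n} = (t_n a(t_n))^{(d-1)/2} a(t_n) = q_n$, one obtains the stated second equality.
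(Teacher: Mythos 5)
Your argument is correct, but it takes a genuinely different route from the paper: the paper's proof is a two-line citation ("follows directly from Theorem 8.6 in \cite{balkema2007} and Proposition 5.4 in \cite{balkema2004}"), whereas you re-derive the asymptotics from scratch by the change of variables $x = A(\alpha_n(w))$, the pointwise limit $\varphi_A(\alpha_n(w)) - \varphi_A(\alpha_n(0)) \to \norm{u}^2/2 + v$ from Lemma~\ref{l:phi_diffp911} (with $y=0$), and dominated convergence, ending with the Gaussian--exponential integral $(2\pi)^{(d-1)/2}$ and the identifications $f_A(\alpha_n(0)) = g(t_n)$ (since $\gamma_A(\basd)=1$) and $\abs{\det A} = z_D(\theta)$, $\abs{\det \alpha_n} = q_n$ --- all of which match the paper's own remarks following the proposition and the computation it later performs in Proposition~\ref{p:mom_conv_exp}. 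The one soft spot is the domination step: you only sketch how to build an $n$-independent integrable majorant from convexity of $\gamma_A$ and rapid variation of $g$, and as written that tail estimate is not carried out. You do not need to construct it by hand: the required bound is exactly Proposition~\ref{p:fA_bd} (Proposition 5.4.2 of \cite{balkema2004}, restated in the paper precisely for this purpose), so citing it closes the gap immediately. With that substitution your proof is complete and has the advantage of being self-contained and of previewing the computation reused for the moment asymptotics, at the cost of length; the paper's citation buys brevity but leans entirely on the external results of Balkema and Embrechts.
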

\begin{proof}
Proof follows directly from Theorem 8.6 in \cite{balkema2007} and Proposition 5.4 in \cite{balkema2004}.
\end{proof}
In Proposition~\ref{e:rho_Hn}, $f_A(\alpha_n(0)) = g(\gamma_A(0, t_n)) = g(t_n)$, because $\gamma_A(\basd) = 1$. For the sake of completeness, we restate part 2 of Proposition 5.4 in \cite{balkema2004}. 

\begin{proposition} \label{p:fA_bd}
Suppose that $f_A = Ce^{-\varphi_A(x)}$, where $A$ an initial transformation that brings $D$ into correct initial position. Then, there exists an $N \in \N$ and a function $f_0 \in L^1(\R^d)$, such that for $n \geq N$ we have that 
\[
\frac{f_A(\alpha_n(x))}{f_A(\alpha_n(0))}\ind{v \geq 0} \leq f_0(x). 
\]
\end{proposition}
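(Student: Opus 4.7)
The plan is to control the ratio by writing
\[
\frac{f_A(\alpha_n(x))}{f_A(\alpha_n(0))} = \exp\bigl(-[\varphi_A(\alpha_n(x)) - \varphi_A(\alpha_n(0))]\bigr),
\]
and showing that on $\{v \geq 0\}$ the exponent is bounded below, uniformly in $n \geq N$, by a function whose negative exponential lies in $L^1(\R^d)$. My first step is to observe that this exponent is nonnegative on $\{v \geq 0\}$: because $A^{-1}(H) = \{v \geq 1\}$ is the outer support halfspace of $A^{-1}(D)$ at $\basd$ and $\gamma_A(\basd) = 1$, positive homogeneity of $\gamma_A$ forces $\gamma_A(u, 1) \geq 1$ for every $u \in \R^{d-1}$. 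Rescaling by $t_n + a(t_n) v > 0$ then yields
\[
\gamma_A(\alpha_n(x)) = (t_n + a(t_n)v)\,\gamma_A\!\left(\tfrac{\sqrt{t_n a(t_n)}}{t_n + a(t_n)v}\, u,\, 1\right) \geq t_n + a(t_n)v \geq t_n,
\]
and monotonicity of $\psi$ gives $\varphi_A(\alpha_n(x)) \geq \psi(t_n) = \varphi_A(\alpha_n(0))$, so the ratio is already at most one.

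To construct an integrable majorant I would partition $\R^{d-1}\times[0,\infty)$ into a bounded piece $B_R := \{\norm{x} \leq R\}$ and its complement. On $B_R$, Lemma~\ref{l:phi_diffp911} with $y = 0$ gives uniform convergence of $\varphi_A(\alpha_n(x)) - \varphi_A(\alpha_n(0))$ to $\norm{u}^2/2 + v$, so for $n \geq N_R$ the exponent is at least $\tfrac{1}{2}(\norm{u}^2/2 + v)$ there and the ratio is dominated by $\exp(-\norm{u}^2/4 - v/2)$. On $B_R^c$ I would exploit the rotundity of $A^{-1}(D)$ combined with its boundedness to derive a uniform-in-direction lower bound $\gamma_A(u,1) - 1 \geq c\min\{\norm{u}^2, \norm{u}\}$, which together with positive homogeneity yields
\[
\gamma_A(\alpha_n(x)) - t_n \geq c\bigl[ a(t_n) v + \min\bigl\{a(t_n)\norm{u}^2,\ \sqrt{t_n a(t_n)}\norm{u}\bigr\} \bigr].
\]
Writing $\psi(s) - \psi(t_n) = \int_{t_n}^{s} dr/a(r)$ and invoking the self-neglecting property of $a$ for the small-increment regime, together with the rapid variation of $g = Ce^{-\psi}$ (which forces $\psi$ to grow at least linearly at infinity) for the large-increment regime, transfers this into a lower bound $\varphi_A(\alpha_n(x)) - \varphi_A(\alpha_n(0)) \geq c'(\norm{u}^2 + v)$ valid uniformly in $n \geq N$ on $B_R^c \cap \{v \geq 0\}$. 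Taking $f_0(x) := C^*\exp(-c^*\norm{u}^2 - c^* v)\,\ind{v \geq 0}$ for $c^*$ small enough and $C^*$ large enough to dominate both pieces produces the required $L^1$ majorant.

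The main obstacle is the tail regime $\norm{x} > R$: the self-neglecting property of $a$ controls $\psi'$ only on intervals of length $O(a(t_n))$, whereas $\gamma_A(\alpha_n(x)) - t_n$ can be of much larger order. Splicing the two asymptotic regimes of $a$---self-neglect on small increments and the rapid-variation-induced superlinear growth of $\psi$ on large increments---with the rotund geometry of $A^{-1}(D)$ to extract a linear-in-$(\norm{u}^2 + v)$ lower bound that is independent of $n$ is the bulk of the argument, and is essentially the content of part (2) of Proposition~5.4 in \cite{balkema2004}.
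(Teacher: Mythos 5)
Your overall strategy (show the ratio is $\leq 1$ on $\{v\geq 0\}$, treat a bounded region via Lemma~\ref{l:phi_diffp911}, and control the tail via the geometry of $A^{-1}(D)$ plus properties of $\psi$) is reasonable, and note for context that the paper itself offers no argument here---Proposition~\ref{p:fA_bd} is explicitly a restatement of part 2 of Proposition 5.4 in \cite{balkema2004}, so you are supplying a proof where the paper supplies a citation. However, your tail estimate contains a genuine error. The claimed uniform bound $\varphi_A(\alpha_n(x)) - \varphi_A(\alpha_n(0)) \geq c'(\norm{u}^2 + v)$ on $B_R^c \cap \{v \geq 0\}$, and the assertion that rapid variation of $g$ forces $\psi$ to grow at least linearly, are both false for admissible von Mises functions whose auxiliary function $a = 1/\psi'$ is unbounded. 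Take $\psi(t) = \log^2 t$, so $a(t) = t/(2\log t) \to \infty$ and $g = Ce^{-\psi}$ is still rapidly varying. With $u = 0$ and $v = t_n$ one gets $\psi(t_n + a(t_n)v) - \psi(t_n) \approx 3\log^2 t_n$, which is vastly smaller than $c' v = c' t_n$; more generally the increment grows only sublinearly (indeed logarithmically for fixed $n$) in $v$, so no $n$-uniform majorant of the proposed form $C^*\exp(-c^*\norm{u}^2 - c^*v)$ exists in this generality. There is also a smaller bookkeeping slip: positive homogeneity gives the quadratic term with an extra factor $t_n/(t_n + a(t_n)v)$, i.e. $\gamma_A(\alpha_n(x)) - t_n \geq a(t_n)v + c\min\bigl\{\tfrac{t_n a(t_n)}{t_n + a(t_n)v}\norm{u}^2,\ \sqrt{t_n a(t_n)}\,\norm{u}\bigr\}$, which matters precisely in the large-$v$ regime you need to control.

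The repair is to aim for an integrable, not exponential, majorant in $v$. The self-neglecting property yields Potter-type bounds: for every $\epsilon > 0$ there is $t_\epsilon$ with $a(t + a(t)s) \leq (1+\epsilon)a(t)(1+\epsilon s)$ for all $t \geq t_\epsilon$, $s \geq 0$, whence $\psi(t_n + a(t_n)v) - \psi(t_n) = \int_0^v a(t_n)/a(t_n + a(t_n)s)\,\dif{s} \geq \tfrac{1}{\epsilon(1+\epsilon)}\log(1+\epsilon v)$, giving the $n$-uniform majorant $(1+\epsilon v)^{-1/(\epsilon(1+\epsilon))}$, which is integrable in $v$ once $\epsilon$ is small; the Gaussian factor in $u$ has to be extracted separately (and only on the range where the relevant $\gamma_A$-increment is of order $a(t_n)$, where self-neglect applies), with the remaining $u$-range handled by the linear-in-$\norm{u}$ part of your geometric bound fed through the same logarithmic estimate. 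This is essentially how the cited Proposition 5.4(2) of \cite{balkema2004} proceeds; alternatively, restricting to bounded $a$ (e.g. $\xi < \infty$ with $a$ eventually bounded) would rescue your exponential bound, but that is a strictly narrower statement than the proposition as stated.
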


\subsubsection{Heavy tails}

The second class of densities that we consider are those $f$ with regularly varying density generators. In other words, we let $g$ be a regularly varying function with tail index $\alpha > d$, so that for $x > 0$
\[
\lim_{t \to \infty} \frac{g(tx)}{g(t)} = x^{-\alpha}.
\]
When this holds we denote this by $g \in RV_{-\alpha}$. Note that when $g$ is continuous and nonincreasing $f$ is homothetic in the sense of \eqref{e:homothetic}, just like the light-tailed case.

\section{Moment convergence} \label{s:mom_conv}

\subsection{Light tails} We begin with a treatment of moment convergence for the class of light-tailed densities encountered in the previous section. The normalizing constants $\tau_{k,n}$ for the covariance in Proposition~\ref{p:mom_conv_exp} differ depending on the behavior of $ng(t_n)r_n^d$. Specifically, for the following regimes we have
\begin{equation} \label{e:tau_big}
\tau_{k,n} := 
\begin{cases}
[ng(t_n)]^{k+1}r_n^{dk}q_n &\mbox{ if } ng(t_n)r_n^d \to 0, \\
ng(t_n)q_n &\mbox{ if } ng(t_n)r_n^d \to \chi \in (0, \infty), \\
[ng(t_n)]^{2k+1}r_n^{2dk}q_n &\mbox{ if } ng(t_n)r_n^d \to \infty.
\end{cases}
\end{equation}

\begin{proposition}\label{p:mom_conv_exp}
For a density $f$ with exponentially-decaying tail as above with $\xi > 0$, $\theta \in S^{d-1}$ and $\Pn$ a Poisson point process on $\R^d$ with intensity $nf$, we have
\begin{align*}
\lim_{n \to \infty} \frac{ \E[ S_{k,n}(\theta) ]}{[ng(t_n)]^{k+1}r_n^{dk}q_n} = \frac{C_{k,d}z_D(\theta)}{(k+1)!} &\int_{(\R^d)^k} \int_0^{\infty} h_1^k(0, \y)  \prod_{i=1}^k \ind{ v \geq -r\xi^{-1}\inp{A^{-1}(y_i)}{\basd}} \\
&\times \exp\bigg\{ -(k+1)v - r\xi^{-1}\sum_{i=1}^k \big \langle \nabla \gamma ( p(\theta)), y_i\big \rangle \bigg\} \dif{v} \dif{\y},
\end{align*}
and for $\tau_{k,n}$ defined as at \eqref{e:tau_big}, 
\[
\lim_{n \to \infty} \tau_{k,n}^{-1} \Var(S_{k,n}(\theta)) =
\begin{cases}
\frac{ \mathcal{I}_{k, k+1}}{(k+1)!} &\mathrm{if} \ \ ng(t_n)r_n^d \to 0,\vspace{5.5pt} \\ 
\sum_{\ell=1}^{k+1} \frac{\chi^{2k+1-\ell}\mathcal{I}_{k, \ell}}{\ell! \big((k+1-\ell)!\big)^2} &\mathrm{if} \ \ ng(t_n)r_n^d \to \chi \in (0, \infty), \\
\frac{ \mathcal{I}_{k, 1}}{(k !)^2 } &\mathrm{if} \ \ ng(t_n)r_n^d \to \infty.
\end{cases}
\]
where $C_{k,d} = [2\pi/(k+1)]^{(d-1)/2}$ and $\mathcal{I}_{k,\ell}$ is the integral defined at \eqref{e:ikl_def}. \\ 

\noindent Furthermore, if $\theta_1, \theta_2 \in S^{d-1}$ and $\theta_1 \neq \pm \theta_2$ then
\[
\lim_{n \to \infty} \tau_{k,n}^{-1} \Cov (S_{k,n}(\theta_1), S_{k,n}(\theta_2)) = 0,
\]
If $\theta_1 = -\theta_2$, then $\Cov (S_{k,n}(\theta_1), S_{k,n}(\theta_2)) =0$ for all $n$. 

\end{proposition}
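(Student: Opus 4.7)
The unified approach is to use the multivariate Mecke formula to express each moment as a deterministic integral over tuples in $H_n$, then perform a single change of variables that converts the divergent halfspace into a region where Lemma~\ref{l:phi_diffp911} gives uniform density asymptotics. For the mean, Mecke gives $\E[S_{k,n}(\theta)] = \tfrac{n^{k+1}}{(k+1)!}\int_{H_n^{k+1}} h^k_{r_n}(x_0,\dots,x_k)\prod_i f(x_i)\,dx$. Translating by $y_i := x_i - x_0$ via (H1), rescaling $y_i = r_n z_i$ via (H3) (Jacobian $r_n^{dk}$), and substituting $x_0 = A\alpha_n(x)$ with $x=(u,v)\in\R^{d-1}\times\R$ (Jacobian $|\det(A\alpha_n)| = z_D(\theta)q_n$) reduces the integrand to $h^k_1(0, z)$ times halfspace indicators and $f$-ratios. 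Lemma~\ref{l:phi_diffp911}, combined with $\gamma_A = \gamma\circ A$ and the chain rule $\nabla\gamma_A(\basd) = A^\top\nabla\gamma(p(\theta))$, forces $f(A\alpha_n(x)+r_n z_i)/g(t_n) \to \exp(-\|u\|^2/2 - v - r\xi^{-1}\inp{\nabla\gamma(p(\theta))}{z_i})$ and $f(A\alpha_n(x))/g(t_n) \to \exp(-\|u\|^2/2 - v)$, while the indicator $\ind{x_0 + r_n z_i \in H_n}$ reduces to $\ind{v \geq -r\xi^{-1}\inp{A^{-1}(z_i)}{\basd}}$. Proposition~\ref{p:fA_bd} validates dominated convergence, with the $z$-support kept compact by (H2) and (H4). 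Integrating out $\exp(-(k+1)\|u\|^2/2)$ over $u \in \R^{d-1}$ then yields $C_{k,d}=(2\pi/(k+1))^{(d-1)/2}$, completing the mean.

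\textbf{Variance.} Decompose $\Var(S_{k,n}(\theta)) = \sum_{\ell=1}^{k+1} I_{\ell,n}$ via the standard Hoeffding-type / Fock-chaos formula for Poisson $U$-statistics, so that $I_{\ell,n}$ integrates $h^k_{r_n}(z, y)\,h^k_{r_n}(z, w)$ against $n^{2k+2-\ell}\prod f$ over $H_n^{2k+2-\ell}$, the $\ell$ shared arguments sitting among $z$. The same anchoring and rescaling argument applied to each $I_{\ell,n}$---one anchor contributing $z_D(\theta)q_n$, $2k+1-\ell$ relative coordinates contributing $r_n^{d(2k+1-\ell)}$, and $2k+2-\ell$ density ratios contributing $[g(t_n)]^{2k+2-\ell}$---produces
\[
I_{\ell,n} \sim \frac{1}{\ell!\,[(k+1-\ell)!]^2}\,[ng(t_n)]^{2k+2-\ell}\,r_n^{d(2k+1-\ell)}\,q_n\,\mathcal{I}_{k,\ell}.
\]
Comparing with the three cases in \eqref{e:tau_big}, one finds $I_{\ell,n}/\tau_{k,n}$ behaves like $(ng(t_n)r_n^d)^{k+1-\ell}$, $(ng(t_n)r_n^d)^{2k+1-\ell}$, or $(ng(t_n)r_n^d)^{1-\ell}$ in the three regimes, respectively. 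Only $\ell=k+1$ survives when $ng(t_n)r_n^d \to 0$, only $\ell = 1$ survives when $ng(t_n)r_n^d \to \infty$, and the full sum survives with weights $\chi^{2k+1-\ell}$ in between---matching the three expressions stated.

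\textbf{Covariance.} For $\theta_1 = -\theta_2$, the halfspaces $H_n(\theta_1)$ and $H_n(\theta_2)$ are disjoint for every $n$ (since $\zeta_D(\pm\theta) > 0$ makes $\inp{\theta}{x} \geq t_n\zeta_D(\theta)$ and $\inp{\theta}{x} \leq -t_n\zeta_D(-\theta)$ incompatible), so the Poisson restrictions are independent and the covariance vanishes identically. For $\theta_1 \neq \pm\theta_2$, strict convexity of the rotund set $D$ forces $\tilde L := \min\{\gamma(x) : x \in H(\theta_1)\cap H(\theta_2)\} > 1$, the minimum being attained at a boundary point distinct from either $p(\theta_i)$; by homogeneity, $H_n(\theta_1)\cap H_n(\theta_2) \subset \{\gamma \geq t_n\tilde L\}$. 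The covariance admits the same $\ell$-chaos expansion as the variance, but with the $\ell$ shared variables confined to this tighter intersection. Bounding their densities by $g(t_n\tilde L)$ and those of the remaining $2(k+1-\ell)$ variables by $g(t_n)$ (locality (H2) keeps them within $\kappa r_n$ of the anchor), and controlling the anchor integral by the analogue of Proposition~\ref{e:rho_Hn} with $t_n$ replaced by $t_n\tilde L$, yields
\[
|I_{\ell,n}(\theta_1,\theta_2)| \leq C^*\,\bigl(g(t_n\tilde L)/g(t_n)\bigr)^{\ell}\,[ng(t_n)]^{2k+2-\ell}\,r_n^{d(2k+1-\ell)}\,q_n = o(\tau_{k,n}),
\]
since $g$ is rapidly varying and $\tilde L > 1$ make the ratio $g(t_n\tilde L)/g(t_n) \to 0$.

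\textbf{Main obstacle.} The most delicate step will be the dominated-convergence justification that underlies all three calculations: Proposition~\ref{p:fA_bd} must control the $2k+2-\ell$ simultaneously perturbed ratios $f(A\alpha_n(x)+r_n z_i)/g(t_n)$ uniformly in $n$ and in each $\ell$. This is ultimately handled by exploiting the compact $z$-support from (H2), the uniform bound (H4), and the finiteness of $\lim r_n$, but some bookkeeping is required because the shift $r_n z_i$ must be absorbed into the $L^1$-majorant $f_0$ of Proposition~\ref{p:fA_bd} uniformly over the $z$-tuple.
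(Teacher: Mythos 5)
Your treatment of the mean and of the variance is essentially the paper's own proof: Mecke formula, the change of variables $x_i = x_0 + r_n y_i$ followed by $x_0 \mapsto (A\circ\alpha_n)(x)$, Lemma~\ref{l:phi_diffp911} together with $\inp{\nabla\gamma_A(\basd)}{A^{-1}(r_ny_i)} = r_n\inp{\nabla\gamma(p(\theta))}{y_i}$, Proposition~\ref{p:fA_bd} for domination, the Gaussian integral over $u$ producing $C_{k,d}$, and the chaos decomposition with the regime bookkeeping $(ng(t_n)r_n^d)^{k+1-\ell}$, $(ng(t_n)r_n^d)^{2k+1-\ell}$, $(ng(t_n)r_n^d)^{1-\ell}$, which is exactly how the three cases of \eqref{e:tau_big} are selected. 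One simplification you can make: your ``main obstacle'' about absorbing the shifts $r_n z_i$ into the majorant $f_0$ does not arise, because on the event that $x_0 + r_n y_i$ lies in a diverging halfspace one has $\gamma_A(\alpha_n(x)+A^{-1}(r_ny_i)) \geq t_n$ by \eqref{e:inp_gamt}, so each shifted ratio $f_A(\alpha_n(x)+A^{-1}(r_ny_i))/f_A(\alpha_n(0))$ is bounded by $1$ (this is \eqref{e:fA_bound}); only the single unshifted anchor ratio needs Proposition~\ref{p:fA_bd}.

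Where you genuinely diverge from the paper is the covariance for $\theta_1 \neq \pm\theta_2$. The paper's argument is soft: after anchoring with $A_{\theta_1}$, the indicator $\ind{\alpha_n(x)\in A^{-1}(H_n(\theta_2))}$ converges a.e.\ to $\ind{\basd\in A^{-1}(H(\theta_2))}$, which vanishes because $p(\theta_1)\notin H(\theta_2)$ by smoothness of $\partial D$, and dominated convergence finishes it --- no rate is needed. Your route is quantitative: $\tilde L := \min\{\gamma(x): x\in H(\theta_1)\cap H(\theta_2)\} > 1$ (correct, by strict convexity/smoothness and coercivity of $\gamma$) and rapid variation of $g$. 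This is viable, but your displayed bound is not yet justified as written: the factor $q_n$ on its right-hand side comes from ``the analogue of Proposition~\ref{e:rho_Hn} with $t_n$ replaced by $t_n\tilde L$,'' and that analogue produces $g(t_n\tilde L)\,(a(t_n\tilde L)\,t_n\tilde L)^{(d-1)/2}a(t_n\tilde L)$, not $g(t_n\tilde L)\,q_n$ with the same $q_n$. You therefore still owe the comparison $g(t_n\tilde L)(a(t_n\tilde L)t_n)^{(d-1)/2}a(t_n\tilde L) = o\big(g(t_n)q_n\big)$; this is immediate when $\xi<\infty$ (then $a(t_n\tilde L)/a(t_n)\to 1$), but for $\xi=\infty$ --- which the proposition covers --- it requires an extra argument trading off the growth of $a(t_n\tilde L)/a(t_n)$ against the decay of $g(t_n\tilde L)/g(t_n)$ (it can be done using $a'\to 0$ and $a(t)/t\to 0$, e.g.\ via $\int_{t_n}^{\tilde L t_n} a(s)^{-1}\dif{s} \geq \epsilon_n^{-1}\log\big(a(t_n\tilde L)/a(t_n)\big)$ with $\epsilon_n = \sup_{s\geq t_n}|a'(s)|$, but it is not automatic). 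You also need the containment $H_n(\theta_1)\cap H_n(\theta_2)\subset t_n\tilde L\,H(\theta^*)$ for some $\theta^*$ before the halfspace asymptotics apply; this follows from a separating-hyperplane argument at the gauge minimizer, but should be stated. The paper's indicator argument avoids all of this, at the cost of giving no rate; your version, once the level-$t_n\tilde L$ mass estimate is supplied, gives a quantitative decay of the covariance, which is genuinely more than the proposition asserts. The $\theta_1=-\theta_2$ case and the limiting variance constants are handled as in the paper and are correct.
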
 

\begin{remark} \label{r:clt_varpos}
For the proof of a central limit theorem when $\xi > 0$, we need conditions which ensure the limiting terms $\mathcal{I}_{k, \ell}$ mentioned above are positive in the limit. Such conditions are not difficult to satisfy in practice. For example, if in addition to conditions (H1)--(H4) above we have the existence of $\kappa_0$ and $M_0$ such that 
\[
\diam(\{x_0, \dots, x_k\}) \leq \kappa_0 \text{ implies that } h^k_1(x_0, \dots, x_k) > 0, 
\]
and $h^k_1(x_0, \dots, x_k) \geq M_0 > 0$ when $h^k_1(x_0, \dots, x_k) > 0$---as is satisfied for non-induced subgraph counts (and simplex counts)---then $\mathcal{I}_{k, \ell} > 0$ for any values of $\xi > 0$ and any behavior of $r_n$ considered in this article. These ``general conditions'' are not unreasonable, and are employed throughout \cite{bachmann2018}. Given that an induced subgraph is \emph{feasible} \cite{penr}, or realizable as an induced subgraph of a geometric graph, then we can guarantee a positive integral by taking $\xi = \infty$ or $r_n \to 0$ as $n \to \infty$. 
\end{remark}

\begin{proof}

Since $\Pn$ is Poisson, we have via the multivariate Mecke formula \cite[][cf. Theorem 4.4]{lastpen}
\begin{equation} \label{e:av_hs}
\E[S_{k,n}(\theta)] = \frac{n^{k+1}}{(k+1)!} \int_{(\R^d)^{k+1}} h^k_{r_n}(x_0, \dots, x_k) \prod_{i=0}^k \ind{x_i \in H_n} f(x_i) \dif{x_i}. 
\end{equation}
Consider \eqref{e:av_hs} and make the change of variables $x_0 = x$ and $x_i = x + r_ny_i$. Further make the change $x \mapsto (A \circ \alpha_n)(x)$. Then $\E[S_{k,n}(\theta)]$ is equal to 
\begin{align}
&\frac{n^{k+1}r_n^{dk}|\det(A \circ \alpha_n)|}{(k+1)!}  \int_{(\R^d)^{k+1}} h^k_1(0, y_1, \dots, y_k) f_A(\alpha_n(x)) \ind{ v \geq 0 }  \notag \\
&\phantom{\frac{m_n^{k+1}r_n^{dk}|\det(\alpha_n \circ A)|}{(k+1)!\rho(H_n)^{k+1}} } \times \prod_{i=1}^k \bigg[ f\big( A(\alpha_n(x)) + r_ny_i \big) \ind{ A(\alpha_n(x)) + r_ny_i \in H_n} \bigg] \dif{x} \dif{\by} \notag \\
&= \frac{n^{k+1}r_n^{dk}q_n z_D(\theta) f_A(\alpha_n(0))^{k+1}}{ (k+1)!}  \int_{(\R^d)^{k+1}} h^k_1(0, y_1, \dots, y_k) \frac{f_A(\alpha_n(x))}{f_A(\alpha_n(0))} \ind{ v \geq 0 } \notag \\
& \phantom{\sim \frac{m_n^{k+1}r_n^{dk}z_D(\theta)^k}{q_n^k C^*_{k,d}}} \times \prod_{i=1}^k \bigg[\frac{ f_A\big( \alpha_n(x) + A^{-1}(r_ny_i)\big)}{f_A(\alpha_n(0))} \ind{ \inp{\alpha_n(x) + A^{-1}(r_ny_i)}{\basd} \geq t_n} \bigg] \dif{x} \dif{\by} 
\label{e:expd_1}
\end{align}
Continuing with \eqref{e:expd_1}, we know that 
\[
\frac{f_A(\alpha_n(x))}{f_A(\alpha_n(0))} = \exp\Big( -\big[\psi(\gamma_A(\alpha_n(x))) - \psi(\gamma_A(\alpha_n(0)))\big] \Big) \to \exp\big(-\lVert u \rVert^2/2 - v\big), \ n \to \infty,
\]
by Proposition 9.11 in \cite{balkema2007}. Additionally, note that 
\begin{align}
\ind{ \inp{\alpha_n(x) + A^{-1}(r_ny_i)}{\basd} \geq t_n}  &= \ind{ a(t_n)v + \inp{A^{-1}(r_ny_i)}{\basd} \geq 0} \notag \\
&= \mathbf{1}\bigg\{ v \geq -\frac{\inp{A^{-1}(r_ny_i)}{\basd}}{a(t_n)} \bigg\} \notag \\
&\to \mathbf{1}\bigg\{ v \geq -\frac{r\inp{A^{-1}(y_i)}{\basd}}{\xi} \bigg\}, \quad n \to \infty, \label{e:ind_conv}
\end{align}
almost everywhere for $y_i$ and $v$. If $\xi = \infty$ or $r = 0$ the limit in \eqref{e:ind_conv} becomes $\ind{ v \geq 0}$. We must now contend with the term
\[
\frac{ f_A\big( \alpha_n(x) + A^{-1}(r_ny_i)\big)}{f_A(\alpha_n(0))},
\]
for which it suffices to recognize 
\[
\varphi_A\big ( \alpha_n(x) + A^{-1}(r_ny_i) \big) - \varphi_A \big( \alpha_n(0) \big) \sim \norm{u}^2/2 + v + \xi^{-1} \inp{\nabla \gamma_A( \basd)}{A^{-1}(r_n y_i)}
\]
by Lemma~\ref{l:phi_diffp911}. Note that 
\[
\inp{\nabla \gamma_A( \basd)}{A^{-1}(r_n y_i)} = \lim_{t \to 0} \frac{\gamma_A\big(\basd + tA^{-1}(r_n y_i)\big) - \gamma_A(\basd)}{t} = r_n \big \langle \nabla \gamma ( p(\theta)), y_i \big \rangle.
\]
Therefore, the integral term in \eqref{e:expd_1} (barring demonstration that the dominated convergence assumption holds) tends to 
\begin{align*}
 \int_{(\R^d)^{k+1}} h^k_1(0, y_1, \dots, y_k) &\ind{ v \geq 0 } \prod_{i=1}^k \ind{ v \geq -r\xi^{-1}\inp{A^{-1}(y_i)}{\basd}} \\
\ \times &\exp\bigg\{ -(k+1)( \, \norm{u}^2/2 + v ) - r\xi^{-1}\sum_{i=1}^k \big \langle \nabla \gamma ( p(\theta) ), y_i\big \rangle \bigg\} \dif{x} \dif{\by}.
\end{align*}

Now we must show that dominated convergence assumption holds for \eqref{e:expd_1}. As $h^k_t$ is uniformly bounded by property (H4), then
\[
h^k_1(0, y_1, \dots, y_k) \leq M\prod_{i=1}^k \ind{ \norm{y_i} \leq \kappa},
\]
by definition, where $\kappa$ is defined in property (H2) of $h^k_t$. Furthermore, we have that 
\begin{equation}\label{e:fA_bound}
\frac{ f_A\big( \alpha_n(x) + A^{-1}(r_ny_i)\big)}{f_A(\alpha_n(0))} \ind{ \inp{\alpha_n(x) + A^{-1}(r_ny_i)}{\basd} \geq t_n} \leq 1,
\end{equation}
because $\gamma_A(\alpha_n(x) + A^{-1}(r_ny_i)) \geq t_n = \gamma_A(\alpha_n(0))$ by \eqref{e:inp_gamt} and the fact that $\psi$ is increasing. That the quantity
\[
f_A(\alpha_n(x))/f_A(\alpha_n(0))\ind{v \geq 0}
\]
has an integrable upper bound follows from Proposition~\ref{p:fA_bd} (taken from \cite{balkema2004}).


\vspace{12pt}
Now, let us consider the asymptotics of the covariance. The case when $\theta_1 = -\theta_2$ follows from the spatial independence of the Poisson process. Let us then consider $\theta_1 \neq -\theta_2$. In this case $H(\theta_1) \cap H(\theta_2) \neq \emptyset$. Standard results, such as Proposition 7.2 in \cite{owada_fclt}, yield that 
\begin{align}
\Cov (S_{k,n}(\theta_1), S_{k,n}(\theta_2)) = \sum_{\ell=1}^{k+1} &\frac{n^{2k+2-\ell}}{\ell! \big((k+1-\ell)!\big)^2} \E \Big[ h^k_{r_n}(\X^{(1)}_k \cup \X_\ell)h^k_{r_n}(\X^{(2)}_k \cup \X_\ell) \notag \\
&\ind{ \X^{(1)}_k \cup \X_\ell \subset H_n(\theta_1)} \ind{ \X^{(2)}_k \cup \X_\ell \subset H_n(\theta_2)} \Big], \label{e:cov_exp1}
\end{align}
where $\X^{(1)}_k, \X^{(2)}_k, \X_\ell$ are $\iid$ collections of $k+1-\ell, k+1-\ell$ and $\ell$ points with density $f$, respectively. For a fixed $\ell$, the expectation term in \eqref{e:cov_exp1} equals
\begin{align}
&\int_{(\R^d)^{2k+2-\ell}} h^k_{r_n}(x_0, \dots, x_{\ell-1}, x_\ell, \dots, x_k)  h^k_{r_n}(x_0, \dots, x_{\ell-1}, x_{k+1}, \dots, x_{2k+1-\ell}) \notag \\
&\times \prod_{i=0}^{2k+1-\ell} \ind{ x_i \in H_n(\theta_1), \ i \in \{0, \dots, k\} }\ind{ x_i \in H_n(\theta_2), i \not \in \{\ell, \dots, k\} } f(x_i) \dif{x_i} \label{e:cov_exp2}. 
\end{align}
Now, make the changes of variable $x_0 = x$ and $x_i = x + r_ny_i$ for $i = 1,\dots, 2k+1-\ell$ as well as $x \mapsto (A \circ \alpha_n)(x)$ where $A = A_{\theta_1}$ is the initial transformation associated to $\theta_1$. We get 
\begin{align}
&r_n^{d(2k+1-\ell)}z_D(\theta_1)q_n f_A(\alpha_n(0))^{2k+2-\ell} \int_{(\R^d)^{2k+2-\ell}} h^k_1(0, y_1, \dots, y_{\ell-1}, y_\ell, \dots, y_k)  \label{e:cov_exp_int} \\
&\phantom{r_n^{d(2k+1-\ell)}} \times h^k_{1}(0, y_1 \dots, y_{\ell-1}, y_{k+1}, \dots, y_{2k+1-\ell}) \ind{v \geq 0} \ind{\alpha_n(x) \in A^{-1}\big(H_n(\theta_2)\big)} \frac{f_A(\alpha_n(x))}{f_A(\alpha_n(0))} \notag \\
&\phantom{r_n^{d(2k+1-\ell)}} \times \prod_{i=1}^{2k+1-\ell} \Bigg[ \ind{  \inp{\alpha_n(x) + A^{-1}(r_ny_i)}{\basd} \geq t_n , \ i \in \{1, \dots, k\} } \notag \\
&\phantom{r_n^{d(2k+1-\ell)}} \times \ind{ A(\alpha_n(x))+ r_ny_i \in H_n(\theta_2), i \not \in \{\ell, \dots, k\} } \frac{f_A\big(\alpha_n(x)+A^{-1}(r_ny_i)\big)}{f_A(\alpha_n(0))} \dif{y_i} \Bigg] \dif{x}. \notag
\end{align}
A key observation in the above is that
\begin{align}
\ind{\alpha_n(x) \in A^{-1}&\big(H_n(\theta_2)\big)} \notag  \\
&= \ind{(\sqrt{a(t_n)/t_n}u, 1 + a(t_n)/t_n v)  \in A^{-1}\big(H(\theta_2)\big)} \notag \\
&\to \ind{ \basd \in A^{-1}\big(H(\theta_2)\big)}, \ \mathrm{a.e.} \ n \to \infty. \label{e:lim_theta1theta2} 
\end{align}
However, $\ind{ \basd \in A^{-1}\big(H(\theta_2)\big)} = 0$ as $\basd \in A^{-1}\big(H(\theta_2)\big)$ is equivalent to $p(\theta_1) \in H(\theta_2)$,
which cannot occur unless $\theta_1 = \theta_2$ as the boundary $\partial D$ is smooth. Before proceeding to the limit of the variance, we will demonstrate that dominated convergence assumption holds for $\theta_1 \neq -\theta_2$. This is much the same as in the case of the expectation, e.g. where the $h^k_1$ terms can be bounded above by $M^2 \prod_{i=1}^{2k+1-\ell} \ind{ \norm{y_i} \leq \kappa}$ and we can apply \eqref{e:fA_bound}---the only difference is we must now find an integral upper bound for
\begin{equation}\label{e:bound_lk}
\prod_{i = k+1}^{2k+1-\ell} \ind{ A(\alpha_n(x))+r_ny_i \in H_n(\theta_2)} \frac{f_A\big(\alpha_n(x)+A^{-1}(r_ny_i)\big)}{f_A(\alpha_n(0))}. 
\end{equation}
However, this does not present a great deal of difficulty as if $A(\alpha_n(x))+r_ny_i \in H_n(\theta_2)$, then 
\[
\gamma\big(A(\alpha_n(x)) + r_ny_i\big) = \gamma_A(\alpha_n + A^{-1}(r_ny_i)) \geq t_n,
\]
by another application of \eqref{e:inp_gamt}. Again we note that $\psi$ increasing, so if the indicator in \eqref{e:bound_lk} holds, we have
\[
\frac{f_A\big(\alpha_n(x)+A^{-1}(r_ny_i)\big)}{f_A(\alpha_n(0))} = \exp\Big( -\big[\psi(\gamma_A(\alpha_n(x)+A^{-1}(r_ny_i))) - \psi(t_n) \big] \Big) \leq 1.
\]
as desired. Thus the dominated convergence assumption is satisfied. \\ 

Now, let us show what the limit of the variance is, i.e. let us set $\theta_1 = \theta_2 = \theta$ in \eqref{e:cov_exp_int}. Suppose that we denote $\y_0 = (0, y_1, \dots, y_{\ell-1})$, $\y_1 = (y_\ell, \dots, y_k)$ and $\y_2 = (y_k+1, \dots, y_{2k+1-\ell})$. Then the same argument as in the proof of the convergence of the expectation yields a limit of  
\begin{align}
\mathcal{I}_{k, \ell} := z_D(\theta) \int_{(\R^d)^{2k+2-\ell}} &h^k_1(\y_0, \y_1 ) h^k_1(\y_0, \y_2)\ind{ v \geq 0 } \prod_{i=1}^{2k+1-\ell} \Bigg[ \ind{ v \geq -r\xi^{-1}\inp{A^{-1}(y_i)}{\basd}} \notag \\
\ \times &\exp\bigg\{ -(2k+2-\ell)( \, \norm{u}^2/2 + v ) - r\xi^{-1}\sum_{i=1}^{2k+1-\ell} \big \langle \nabla \gamma ( p(\theta) ), y_i\big \rangle \bigg\} \Bigg] \dif{x} \dif{\by}. \label{e:ikl_def}
\end{align}
for the integral in \eqref{e:cov_exp_int}. 
\end{proof}

Throughout the below, define $b(t_n) := \sqrt{a(t_n)t_n}$. We now assess the limiting behavior of certain moments of $S_{k,n}(\theta)$ in the case of a much lighter tail. Let us suppose that $r_n \to r \in (0, \infty)$ and let us redefine the normalizing sequence $\tau_{k,n}$ by
\begin{equation} \label{e:tau_lite}
\tau_{k,n} := 
\begin{cases}
[ng(t_n)q_n]^{k+1} &\mbox{ if } ng(t_n)q_n \to 0, \\
1 &\mbox{ if } ng(t_n)q_n \to \chi \in (0, \infty), \\
[ng(t_n)q_n]^{2k+1} &\mbox{ if } ng(t_n)q_n \to \infty.
\end{cases}
\end{equation}

The integrals $\mathcal{I}_{k, \ell}$ seen in the limit of the variance of Proposition~\ref{p:mom_conv_exp_lite} can be shown to be positive under the ``general conditions'' discussed in Remark~\ref{r:clt_varpos}. Actually, all that is necessary in the case that $\beta = 0$ is that the constant $c^0_k$ \eqref{e:co_k} is positive. The loss of dimension phenomenon, as discussed in the Introduction, is expressed in Proposition~\ref{p:mom_conv_exp_lite} by the fact that the limit integral in the case $\beta \in (0, \infty)$---such as is the case when the density generator $g$ is Gaussian---is an integral with respect to a product of $\R^{d-1}$ rather than $\R^d$. 


\begin{proposition} \label{p:mom_conv_exp_lite}
Suppose that $r_n \to r \in (0, \infty)$, and assume that $h^k_r$ is a.e. continuous for all $t \geq 0$. For a density $f$ with exponentially-decaying tail as above with $\lim_{n \to \infty} b(t_n) \to \beta \in [0, \infty)$, $\theta \in S^{d-1}$ and $\Pn$ a Poisson point process on $\R^d$ with intensity $nf$, we have for $\beta > 0$ 
\begin{align*}
&\lim_{n \to \infty} \frac{ \E[ S_{k,n}(\theta) ]}{[ng(t_n)q_n]^{k+1}} = \frac{z_D(\theta)^{k+1}}{(k+1)!} \int_{(\R^{d-1})^{k+1}} h^k_r\big( (A(\beta u_0, 0), \dots, A(\beta u_k, 0)\big) \prod_{i=0}^k e^{-\norm{u_i}^2/2} \dif{u_i}.
\end{align*}
and if $\beta = 0$,
\[
\lim_{n \to \infty} \frac{ \E[ S_{k,n}(\theta) ]}{[ng(t_n)q_n]^{k+1}} = \frac{c^0_k [z_D(\theta)(2\pi)^{(d-1)/2}]^{k+1}}{(k+1)!},
\]
where $c^0_k$ is as defined at \eqref{e:co_k}. Additionally, for any $\theta \in S^{d-1}$ and $\tau_{k,n}$ as defined at \eqref{e:tau_lite},
\[
\lim_{n \to \infty} \tau_{k,n}^{-1} \Var(S_{k,n}(\theta)) = 
\begin{cases}
\frac{ \mathcal{I}_{k, k+1}}{(k+1)!} &\mathrm{if} \ \ ng(t_n)q_n \to 0, \vspace{5.5pt} \\ 
\sum_{\ell=1}^{k+1} \frac{{\chi}^{2k+1-\ell}\mathcal{I}_{k, \ell}}{\ell! \big((k+1-\ell)!\big)^2} &\mathrm{if} \ \ ng(t_n)q_n \to \chi \in (0, \infty), \\
\frac{ \mathcal{I}_{k, 1}}{(k !)^2 } &\mathrm{if} \ \ ng(t_n)q_n \to \infty,
\end{cases}
\]
where $\mathcal{I}_{k,\ell}$ defined as at \eqref{e:istar_kl}. Furthermore, if $\theta_1, \theta_2 \in S^{d-1}$ and $\theta_1 \neq \pm \theta_2$ then
\[
\lim_{n \to \infty} \tau_{k,n}^{-1} \Cov (S_{k,n}(\theta_1), S_{k,n}(\theta_2)) = 0,
\]
If $\theta_1 = -\theta_2$, then $\Cov (S_{k,n}(\theta_1), S_{k,n}(\theta_2)) =0$ for all $n$. 

\end{proposition}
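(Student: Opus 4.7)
The proof follows the template of Proposition~\ref{p:mom_conv_exp}, but because $r_n \to r > 0$ rather than vanishing, the ``pivot-and-cluster'' substitution used there is no longer the appropriate reduction. After invoking the multivariate Mecke formula, my plan is to substitute $x_i = A \circ \alpha_n(x_i^*)$ independently for each $i = 0, \ldots, k$, so that each $x_i^* = (u_i^*, v_i^*)$ lives in a common normalized coordinate system. The Jacobian contributes $[z_D(\theta) q_n]^{k+1}$, normalizing by $f_A(\alpha_n(0)) = g(t_n)$ yields $g(t_n)^{k+1}$, and the halfspace indicator becomes $\ind{v_i^* \geq 0}$. A Taylor expansion of $\gamma_A$ around $\mathbf{e}_d$ together with positive homogeneity yields $\gamma_A(\alpha_n(x_i^*)) - t_n \sim a(t_n)(v_i^* + \lVert u_i^* \rVert^2/2)$, and the mean value theorem applied to $\psi$---combined with the slow variation of $a$ at infinity guaranteed by the von Mises condition $(1/\psi')'\to 0$---gives
\[
\psi(\gamma_A(\alpha_n(x_i^*))) - \psi(t_n) \to v_i^* + \lVert u_i^*\rVert^2/2,
\]
so each normalized density ratio tends to $\exp(-\lVert u_i^* \rVert^2/2 - v_i^*)$.

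To handle the kernel, I would use translation invariance (H1) to express $h^k_{r_n}(A(\alpha_n(x_0^*)), \ldots, A(\alpha_n(x_k^*)))$ in terms of the differences $\alpha_n(x_i^*) - \alpha_n(x_0^*) = (b(t_n)(u_i^*-u_0^*), a(t_n)(v_i^* - v_0^*))$, which tend to $(\beta(u_i^* - u_0^*), 0)$ if $\beta > 0$ and to $0$ if $\beta = 0$. The a.e.\ continuity of $h^k_r$ together with $r_n \to r$ then yields convergence---after one further translation to symmetrize the arguments---to $h^k_r(A(\beta u_0^*, 0), \ldots, A(\beta u_k^*, 0))$ in the $\beta > 0$ case and to $c^0_k$ in the $\beta = 0$ case (using (H3) to note that $h^k_r(0, \ldots, 0)$ is independent of $r$). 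Dominated convergence is justified by combining the envelope $g(\gamma_A(\alpha_n(x_i^*)))/g(t_n) \ind{v_i^* \geq 0} \leq f_0(x_i^*)$ from Proposition~\ref{p:fA_bd} with the uniform bound $h^k_{r_n} \leq M$ from (H4). The vertical integrals $\int_0^\infty e^{-v_i^*} \dif{v_i^*}$ each collapse to $1$, leaving the stated horizontal expression.

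The variance follows from the same overlap-$\ell$ decomposition as in Proposition~\ref{p:mom_conv_exp}: under the above change of variables, each overlap term is of order $[ng(t_n)q_n]^{2k+2-\ell} \mathcal{I}_{k,\ell}$, where $\mathcal{I}_{k,\ell}$ is the light-tailed analog of \eqref{e:ikl_def} reflecting the loss of dimension (integration on $(\R^{d-1})^{2k+2-\ell}$ weighted by standard Gaussians, with kernel arguments of the form $A(\beta u_i, 0)$). The three possible limits of $ng(t_n)q_n$ select which $\ell$-terms survive normalization by $\tau_{k,n}$ (only $\ell = k+1$, every $\ell$, or only $\ell = 1$, respectively). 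For $\theta_1 \neq \pm \theta_2$, the cross-covariance vanishes by exactly the same argument as before: with $A = A_{\theta_1}$, the indicator $\ind{\alpha_n(x^*) \in A^{-1}(H_n(\theta_2))}$ tends to $\ind{\mathbf{e}_d \in A^{-1}(H(\theta_2))} = 0$, since smoothness of $\partial D$ prevents $p(\theta_1) \in H(\theta_2)$ unless $\theta_1 = \theta_2$. For $\theta_1 = -\theta_2$ the halfspaces are disjoint, so the spatial independence of $\Pn$ forces the covariance to vanish identically.

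The main obstacle is the $\beta = 0$ regime: the horizontal differences collapse to zero irrespective of the scale of $u_i^* - u_0^*$, so the local-support property (H2) no longer confines the effective domain of integration, and the $L^1$ envelope $\prod_i f_0(x_i^*)$ from Proposition~\ref{p:fA_bd} must do all the work in justifying dominated convergence. Identifying the limit further requires a.e.\ continuity of $h^k_r$ specifically at the diagonal point $(0, \ldots, 0)$ in order to pass $h^k_{r_n}$ under the integral sign and obtain the constant $c^0_k$.
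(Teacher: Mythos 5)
Your proposal matches the paper's proof in all essentials: the same per-point change of variables $x_i \mapsto (A\circ\alpha_n)(x_i)$ with Jacobian factor $[z_D(\theta)q_n]^{k+1}$, the same use of (H1) and linearity of $A$ to reduce the kernel to arguments $A(b(t_n)u_i, a(t_n)v_i)$ and pass to $A(\beta u_i,0)$ (or $c^0_k$) by the assumed a.e.\ continuity, domination via Proposition~\ref{p:fA_bd} together with (H4), the same overlap-$\ell$ decomposition yielding terms of order $[ng(t_n)q_n]^{2k+2-\ell}\mathcal{I}_{k,\ell}$ with the three regimes of $\tau_{k,n}$ selecting the surviving terms, and the same smoothness-of-$\partial D$ argument sending the cross-angle indicator to zero. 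The only cosmetic differences are that the paper translates by $A(0,t_n)$ rather than by $A(\alpha_n(x_0))$, cites Proposition 9.11 of \cite{balkema2007} for the density-ratio limit instead of re-deriving it, and for $\theta_1\neq\pm\theta_2$ bounds the kernels by $M^2$ and changes variables at $\theta_1$ and $\theta_2$ separately so that Proposition~\ref{p:fA_bd} applies cleanly to every factor.
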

\begin{proof}
For ease of exposition, suppose that $r_n$ is constant throughout, so that we may assume $r=1$. The same expectation as in \eqref{e:av_hs} holds. Define $A_n := A \circ \alpha_n$ and make the change of variable $x_i \mapsto A_n(x_i)$ for all $i=0, 1, \dots, k$, to get 
\[
\frac{(nz_D(\theta)q_n)^{k+1}}{(k+1)!} \int_{(\R^d)^{k+1}} h^k_1(A_n(x_0), \dots, A_n(x_k)) \prod_{i=0}^k \ind{v_i \geq 0} f_A(\alpha_n(x_i)) \dif{x_i}. 
\]
As $ h^k_t$ is translation invariant then $h^k_1\big(\X + A(0, t_n)\big) = h^k_1(\X)$, hence if $x_i = (u_i, v_i)$, then
\begin{align*}
h^k_1(A_n(x_0), \dots, A_n(x_k)) &= h^k_1\big( A(b(t_n)u_0, a(t_n)v_0), \dots, A(b(t_n)u_k, a(t_n)v_k) \big) \\
&\to  h^k_1\big( A(\beta u_0, 0), \dots, A(\beta u_k, 0)\big), \ \text{a.e.} \quad n \to \infty, 
\end{align*}
by the assumed continuity a.e. and as $a(t_n) \to 0$ as $n \to \infty$, as $b(t_n)$ is bounded above by some constant. If $\beta = 0$, we get
\[
h^k_1\big( A(0), \dots, A(0) \big) = c^0_k. 
\]
The rest of the result follows in exactly the same manner as the proof of the limit of the expectation in Proposition~\ref{p:mom_conv_exp}, using Proposition~\ref{p:fA_bd} for the dominated convergence assumption and the boundedness of the family $(h^k_1, \, r \geq 0)$.\\ 

We begin by proving the normalized covariance converges to zero. First, we use the formula of \eqref{e:cov_exp1} and investigate the expectation term at \eqref{e:cov_exp2}, which can be bounded above by
\begin{align*}
\int_{(\R^d)^{2k+2-\ell}} M^2 & \ind{x_0 \in H_n(\theta_2)} \prod_{i=0}^k \ind{x_i \in H_n(\theta_1)}f(x_i) \prod_{j=k+1}^{2k+1-\ell} \ind{x_j \in H_n(\theta_2)}f(x_j) \dif{\x}.
\end{align*}
If we make the changes of variable $x_i \mapsto \big(A_{\theta_1} \circ \alpha_n\big)(x_i)$ for $i = 0,\dots, k$ and $x_j \mapsto \big(A_{\theta_2} \circ \alpha_n\big)(x_j)$ for $j = k+1, \dots, 2k+1-\ell$, we have
\begin{align*}
M^2 [q_n g(t_n)]^{2k+2-\ell} z_D(\theta_1)^{k+1} &z_D(\theta_2)^{k+1-\ell}  \int_{(\R^d)^{2k+2-\ell}} \ind{\alpha_n(x_0) \in A^{-1}\big(H_n(\theta_2)\big)} \\
&\times  \prod_{i=0}^k  \ind{v_i \geq 0}\frac{f_{A_{\theta_1}}(\alpha_n(x_i))}{f_{A_{\theta_1}}(\alpha_n(0))} \prod_{j=k+1}^{2k+1-\ell} \ind{v_j \geq 0}\frac{f_{A_{\theta_2}}(\alpha_n(x_i))}{f_{A_{\theta_1}}(\alpha_n(0))} \dif{\x},
\end{align*}
which can be seen to converge to 0 if $\theta_1 \neq \theta_2$ by the assumed smoothness of the boundary $\partial D$---see \eqref{e:lim_theta1theta2}. Additionally, the dominated convergence condition follows from liberally applying Proposition~\ref{p:fA_bd}.

We now demonstrate the limit for the variance; that is, when $\theta_1=\theta_2=\theta$. As with the covariance above, we proceed as at \eqref{e:cov_exp1} and expand the expectation out as in \eqref{e:cov_exp2}. After making the changes of variable $x_i \mapsto A_n(x_i)$ for all $i=0, 1, \dots, 2k+1-\ell$, (with $A_n = A \circ \alpha_n$), if we denote $A^*_n(x_i) := A(b(t_n)u_i, a(t_n)v_i)$, we have
\begin{align}
&[z_D(\theta)q_n f_A(\alpha_n(0))]^{2k+2-\ell} \int_{(\R^d)^{2k+2-\ell}} h^k_1 \big (A^*_n(x_0), \dots, A^*_n(x_k) \big)  \label{e:cov_exp_int_lite} \\
&\phantom{r_n^{d(2k+1-\ell)}} \times h^k_1 \big(A^*_n(x_0), \dots, A^*_n(x_{\ell-1}), A^*_n(x_{k+1}), \dots, A^*_n(x_{2k+1-\ell})\big ) \notag \\
&\phantom{r_n^{d(2k+1-\ell)}} \times \prod_{i=0}^{2k+1-\ell} \Bigg[ \frac{f_A(\alpha_n(x_i))}{f_A(\alpha_n(0))} \ind{  v_i \geq 0} \Bigg] \dif{\x}  \notag
\end{align}
To show the dominated convergence assumption holds, we again repeatedly apply Proposition~\ref{p:fA_bd}. If $\theta_1 = \theta_2$ then the integral in \eqref{e:cov_exp_int_lite} converges to 
\begin{align}
&\mathcal{I}_{k, \ell} := z_D(\theta)^{2k+2-\ell} \int_{(\R^{d-1})^{2k+2-\ell}} h^k_1 \big (A(\beta u_0, 0), \dots, A(\beta u_k, 0) \big) \label{e:istar_kl} \\
&\qquad \times h^k_1\big((A(\beta u_0, 0), \dots, A(\beta u_{\ell-1}, 0), A(\beta u_{k+1}, 0), \dots, A(\beta u_{2k+1-\ell}, 0)\big) \prod_{i=0}^{2k+1-\ell} e^{-\norm{u_i}^2/2} \dif{u_i}. \notag 
\end{align}
When $ng(t_n)q_n \to \infty$, then the $2k+1$ exponent term dominates (corresponding to $\mathcal{I}_{k,1}$) and when $ng(t_n)q_n \to 0$, then the $k+1$ exponent term in the covariance sum dominates. All terms contribute when $ng(t_n)q_n \to \chi \in (0, \infty)$. 

\end{proof}

\begin{remark} \label{r:cond}
The results in Proposition~\ref{p:mom_conv_exp_lite} can be stated quite naturally in terms of conditional probabilities. Consider the conditionally extreme $U$-statistic $S_{k,n}^*(\theta)$ as defined in Section~\ref{ss:clf}. As $\rho(H_n) \sim (2\pi)^{(d-1)/2} z_D(\theta) g(t_n) q_n$ by Proposition~\ref{e:rho_Hn}, then we can restate the limit for the expectation as
\begin{align*}
\lim_{n \to \infty} \frac{ \E[ S^*_{k,n}(\theta) ]}{n^{k+1}} = \frac{1}{(2\pi)^{(d-1)/2}(k+1)!} \int_{(\R^{d-1})^{k+1}} h^k_1\big( (A(\beta u_0, 0), \dots, A(\beta u_k, 0)\big) \prod_{i=0}^k e^{-\norm{u_i}^2/2} \dif{u_i}. 
\end{align*}
We may contrast this growth with that established in Proposition~\ref{p:mom_conv_exp}, where if we take $r_n \to r \in (0, \infty)$ we get that $\E[S^*_{k,n}(\theta)] = \Theta ( n^{k+1}/q_n^k)$---which, in the case that $\xi > 0$, is much slower than the growth rate of $\E[S^*_{k,n}(\theta)]$ in the case that $\xi = 0$ and $\beta < \infty$. This makes intuitive sense, as the observations in the case of a lighter tail will cluster towards the support hyperplane associated with $H_n$ more than in the case of a heavier tail. Theorem~\ref{t:conv_compare} establishes a result for this phenomenon in terms of Kolmogorov distance to a standard normal distribution, including in the heavy tail case.
\end{remark}

\subsection{Heavy tails} Here we consider a density generator $g \in RV_{-\alpha}$. Similarly to the exponentially-decaying tail cases above, we can establish moment convergence here. However, unlike in the previous cases, the covariance does not tend to zero. For proving the dominated convergence assumption, we will rely heavily on Potter's bounds\footnote{A lower bound also holds here but we have no use for it.} (cf. Proposition 2.6 in \cite{htresnick}). That is, for any $\epsilon > 0$ there exists some $t_0$ such that for $t \geq t_0$ and $x \geq 1$ we have
\begin{equation} \label{e:potters_bd}
\frac{g(tx)}{g(t)} < (1+\epsilon)x^{-\alpha+\epsilon}
\end{equation}
As with the case of light tails, we define a normalizing sequence 
\begin{equation} \label{e:tau_rv}
\upsilon_{k,n} := 
\begin{cases}
[ng(t_n)]^{k+1}r_n^{dk}t_n^d &\mbox{ if } ng(t_n)r_n^d \to 0, \\
ng(t_n)t_n^d &\mbox{ if } ng(t_n)r_n^d \to \chi \in (0, \infty), \\
[ng(t_n)]^{2k+1}r_n^{2dk}t_n^d &\mbox{ if } ng(t_n)r_n^d \to \infty.
\end{cases}
\end{equation}

\begin{proposition} \label{p:mom_conv_rv}
For a density $f$ with a nonincreasing regularly varying density generator $g$ having tail index $\alpha > d$ we have
\[
\lim_{n \to \infty} \frac{ \E[ S_{k,n}(\theta) ]}{[ng(t_n)]^{k+1}r_n^{dk}t_n} = \frac{1}{(k+1)!} \int_{(\R^d)^{k+1}} h^k_1(0, \y) \ind{v \geq 1} \gamma_A(x)^{-\alpha(k+1)} \dif{x} \dif{\y}.
\]
Furthermore, we have for any $\theta_1, \theta_2 \in S^{d-1}$ and $\upsilon_{k,n}$ as defined at \eqref{e:tau_rv} that
\[
\lim_{n \to \infty} \upsilon_{k,n}^{-1} \Cov(S_{k,n}(\theta_1), S_{k,n}(\theta_2)) =
\begin{cases}
\frac{ \mathcal{H}_{k, k+1}}{(k+1)!} &\mathrm{if} \ \ ng(t_n)r_n^d \to 0, \\ 
\sum_{\ell=1}^{k+1} \frac{\chi^{2k+1-\ell}\mathcal{H}_{k, \ell}}{\ell! \big((k+1-\ell)!\big)^2} &\mathrm{if} \ \ ng(t_n)r_n^d \to \chi \in (0, \infty), \\ 
\frac{ \mathcal{H}_{k, 1}}{(k !)^2 } &\mathrm{if} \ \ ng(t_n)r_n^d \to \infty,
\end{cases}
\]
where $\mathcal{H}_{k, \ell}$ is defined at \eqref{e:ikl_rv}.
\end{proposition}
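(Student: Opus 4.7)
The plan is to mirror the argument for Proposition~\ref{p:mom_conv_exp}, replacing the ``exponential'' rescaling $A \circ \alpha_n$ with the ``polynomial'' rescaling $x \mapsto t_n A(z)$, and using regular variation of $g$ in place of the asymptotics for $\varphi$. The core ingredient will be Potter's bound \eqref{e:potters_bd}, invoked both to identify the pointwise limit and to dominate the integrand. Note that, unlike the light-tailed setting, the mass does not concentrate on the support hyperplane $\partial H(\theta)$, so the covariance between two different angles need not vanish; accordingly, I will keep the analysis of the variance and covariance unified.

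For the expectation, I begin from the multivariate Mecke formula \eqref{e:av_hs} and perform the substitutions $x_0 = x$, $x_i = x + r_n y_i$, and finally $x = t_n A(z)$ with $z = (u,v)$. The Jacobian of $x \mapsto t_n A(z)$ is $t_n^d z_D(\theta)$, the indicator $\ind{x \in H_n}$ becomes $\ind{v \geq 1}$, and the density factor becomes $f(t_n A(z)) = g(t_n \gamma_A(z))$ by the positive homogeneity of $\gamma$ and $\gamma_A = \gamma \circ A$. For each shifted point, $\gamma\bigl(A(t_n z) + r_n y_i\bigr) = t_n\, \gamma_A\bigl(z + A^{-1}(r_n y_i/t_n)\bigr)$, and since $\|y_i\|\leq \kappa$ by (H2) and $r_n/t_n \to 0$, this tends to $t_n \gamma_A(z)$. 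Dividing through by $g(t_n)^{k+1}$ and applying the definition of $g \in RV_{-\alpha}$ yields the pointwise limit $h^k_1(0, \y)\,\ind{v \geq 1}\, \gamma_A(z)^{-\alpha(k+1)}$, absorbing the factor $z_D(\theta)$ into the expression $\gamma_A(z)^{-\alpha(k+1)}$ after relabelling $z$ as $x$. Integrability of the limit is guaranteed because $\alpha(k+1) > d$ and $\{v \geq 1\}$ forces $\gamma_A(z) \geq 1$ by \eqref{e:inp_gamt}.

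For the covariance, I apply the standard identity used at \eqref{e:cov_exp1}, expanding $\Cov(S_{k,n}(\theta_1), S_{k,n}(\theta_2))$ as a sum over $\ell = 1,\dots,k+1$ of expectations over $2k+2-\ell$ points with $\ell$ shared indices. I then perform the analogous change of variables using the initial transformation $A = A_{\theta_1}$. The powers $[ng(t_n)]^{2k+2-\ell}\,r_n^{d(2k+1-\ell)}\,t_n^d$ emerge from the $2k+2-\ell$ density factors, the $2k+1-\ell$ Jacobians $r_n^d$ from the $y$-changes, and the single $t_n^d z_D(\theta_1)$ from the $z$-change. The crucial step is that the indicator $\ind{A(t_n z) + r_n y_i \in H_n(\theta_2)}$ converges to $\ind{A_{\theta_1}(z) \in H(\theta_2)}$, which—unlike the light-tailed case \eqref{e:lim_theta1theta2}—has positive Lebesgue measure on $\{v \geq 1\}$ whenever $\theta_1 \neq -\theta_2$. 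Comparing the exponents of $ng(t_n)r_n^d$ across $\ell$ identifies the dominant term in each regime: $\ell = k+1$ when $ng(t_n)r_n^d \to 0$, $\ell = 1$ when $ng(t_n)r_n^d \to \infty$, and all terms contributing (weighted by powers of $\chi$) in the middle regime. This yields the normalization $\upsilon_{k,n}$ and the limits $\mathcal{H}_{k,\ell}$.

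The main obstacle is justifying dominated convergence for both integrands, because the ratios $g(t_n \gamma_A(\cdot))/g(t_n)$ are not bounded by a single function and the integration runs over all of $\R^d$. This is where Potter's bound \eqref{e:potters_bd} is decisive. Since $v \geq 1$ implies $\gamma_A(z) \geq 1$, for $n$ large
\[
\frac{g(t_n \gamma_A(z))}{g(t_n)} \leq (1+\epsilon)\,\gamma_A(z)^{-\alpha+\epsilon},
\]
and the same argument applies to each shifted factor because the companion indicator forces $\gamma_A\bigl(z + A^{-1}(r_n y_i/t_n)\bigr) \geq 1$ via \eqref{e:inp_gamt}. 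Choosing $\epsilon$ so small that $(\alpha-\epsilon)(k+1) > d$—possible since $\alpha > d$—produces an integrable envelope of the form $C\,\gamma_A(z)^{-(\alpha-\epsilon)(k+1)}\,\ind{v \geq 1}$, multiplied by the compact-support factor $M\prod_i \ind{\|y_i\|\leq \kappa}$ inherited from (H2) and (H4). The same envelope, adapted to $2k+2-\ell$ density factors, handles the covariance integrand after observing that the $k+1$ factors in the second group also satisfy the halfspace constraint for $H_n(\theta_2)$, giving $\gamma(A(t_n z) + r_n y_i) \geq t_n$ by the analogue of \eqref{e:inp_gamt} at angle $\theta_2$.
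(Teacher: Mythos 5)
Your overall route is the same as the paper's: expand the covariance via \eqref{e:cov_exp1}--\eqref{e:cov_exp2}, rescale by $t_n$ (using positive homogeneity of $\gamma$), identify the limit through regular variation of $g$ (noting, as you do, that the $\theta_2$-indicator now has a nondegenerate limit, unlike \eqref{e:lim_theta1theta2}), and justify dominated convergence with Potter's bound; the paper simply uses the plain scaling $x \mapsto t_n x$ rather than $x \mapsto t_n A_{\theta_1}(z)$, so if you keep the initial transformation you must undo it (and the extra Jacobian $z_D(\theta_1)$) at the end to recover $\mathcal{H}_{k,\ell}$ exactly as written in \eqref{e:ikl_rv}; also note the identification of the limit needs the uniform convergence theorem for regularly varying functions (the arguments $\gamma(x+(r_n/t_n)y_i)$ move with $n$), which the paper invokes explicitly.

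The one step that does not follow as written is your dominating envelope $C\,\gamma_A(z)^{-(\alpha-\epsilon)(k+1)}\ind{v\ge 1}$ together with the parameter choice ``$(\alpha-\epsilon)(k+1)>d$''. Potter's bound applied to a shifted factor gives $(1+\epsilon)\,\gamma_A\big(z+A^{-1}(r_ny_i)/t_n\big)^{-\alpha+\epsilon}$, which decays in the \emph{shifted} argument, not in $\gamma_A(z)$; converting this into decay in $z$ uniformly in $n$ requires an extra comparability estimate (e.g. for $n$ large, $\gamma_A(z+\delta_n)\ge c\,\gamma_A(z)$ on the relevant region, using that the gauge is comparable to the Euclidean norm and $r_n/t_n\to 0$), which you do not supply. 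Without it, what your argument actually yields is a single decaying factor $\gamma_A(z)^{-\alpha+\epsilon}$ (from the unshifted point, whose indicator forces $\gamma_A(z)\ge 1$) times constants for the shifted factors—and then your condition $(\alpha-\epsilon)(k+1)>d$ is not enough, since it permits $\alpha-\epsilon\le d$, making the envelope non-integrable. The repair is exactly what the paper does: bound each shifted factor by $1$ (either by $g$ nonincreasing, since its argument exceeds $t_n$, or by Potter since its rescaled argument exceeds $1$), keep only the unshifted Potter factor, and choose $\epsilon\in(0,\alpha-d)$ so that $\int_{\{\gamma\ge 1\}}\gamma(x)^{-\alpha+\epsilon}\dif{x}<\infty$ via the ball-comparison $\gamma_{B(0,u)}\le\gamma_D\le\gamma_{B(0,s)}$. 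With that adjustment (or with the uniform comparability step spelled out), your proof matches the paper's.
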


\begin{proof}
As it essentially follows the same logic, we omit the proof for the expectation case and focus on that of the covariance. Of course, we must suppose that $\theta_1 \neq -\theta_2$. Per usual, we expand the covariance as at \eqref{e:cov_exp1} and consider the integral at \eqref{e:cov_exp2}. Recall the notation $\y_0 = (0, y_1, \dots, y_{\ell-1})$, $\y_1 = (y_\ell, \dots, y_k)$ and $\y_2 = (y_k+1, \dots, y_{2k+1-\ell})$ as in \eqref{e:ikl_def}. Making the changes of variable $x_0 = x$ and $x_i = x + r_ny_i$ for $i = 1,\dots, 2k+1-\ell$ as well as $x \mapsto t_nx$ yields an integral term of 
\begin{align}
r_n^{d(2k+1-\ell)}t_n^dg(t_n)^{2k+2-\ell} &\int_{(\R^d)^{2k+2-\ell}} h^k_1(\y_0, \y_1) h^k_1(\y_0, \y_2)  \ind{t_n x + r_n \y_0 \subset H_n(\theta_1) \cap H_n(\theta_2)} \notag \\
&\times  \ind{t_n x + r_n \y_1 \subset H_n(\theta_1)}\ind{t_n x + r_n \y_2 \subset H_n(\theta_2)}\frac{g\big(t_n\gamma(x)\big)}{g(t_n)} \notag \\
&\times \prod_{i=1}^{2k+1-\ell} \frac{g\big(t_n \gamma(x + (r_n/t_n)y_i\big))}{g(t_n)} \dif{x} \dif{\y}, \label{e:cov_exp_rv}
\end{align}
by the positive homogeneity of the gauge function. By \eqref{e:inp_gamt}, we have that 
\[
\gamma(t_nx + r_ny_i) \geq t_n, \ i = 0, 1, \dots, 2k+1-\ell,
\]
in the case that $t_nx + r_ny_i \in H_n(\theta)$ (for any $\theta \in S^{d-1}$), where for convenience $y_0 \equiv 0$. Hence, applying positive homogeneity again yields that for any $\eta \in (0, \alpha-d)$, there exists an $N$ such that if $n \geq N$ and if $t_nx + r_ny_i \in H_n(\theta)$, then
\[
\frac{g\big(t_n \gamma(x) \big)}{g(t_n)} \leq 2\gamma(x)^{-\alpha+\eta},
\]
by Potter's bound \eqref{e:potters_bd} and the fact we may elect to choose $\eta < 1$. If we note that $\gamma = \gamma_D$ is the gauge function of a bounded open (convex) set $D$ containing the origin, then there exists centered open balls $B(0, s)$ and $B(0, u)$, $s, u > 0$ such that $B(0, s) \subset D \subset B(0, u)$. The open (Euclidean) balls have gauge functions $\gamma_{B(0, s)}(x) = \lVert x \rVert/s$ for any $s > 0$. Therefore, $\gamma_{B(0,u)}(x) \leq \gamma_{D}(x) \leq \gamma_{B(0,s)}(x)$. Hence, we have 
\[
\int_{\R^d} \gamma(x)^{-\alpha+\eta}\ind{\gamma(x) \geq 1} \dif{x} \leq u^{\alpha-\eta} \int_{\R^d} \norm{x}^{-\alpha+\eta}\ind{\norm{x} \geq s} \dif{x}. 
\]
This integral is bounded by
\[
C^* \int_s^{\infty} w^{d-1-\alpha+\eta} \dif{w} < \infty.
\]
by a standard polar coordinate transform. The assumption on $g$ implies that when $t_nx + r_ny_i \in H_n(\theta)$ that 
\[
\frac{g\big(t_n \gamma(x + (r_n/t_n)y_i)\big)}{g(t_n)} \leq 1,
\]
hence upon appropriate normalization---and properties (H2) and (H4) of the kernels $(h^k_r)_{r \geq 0}$---the dominated convergence assumption holds. The quality of convergence of regularly varying functions is uniform on intervals bounded away from 0, hence we get our desired limit consisting of the sum of integral terms
\begin{equation} \label{e:ikl_rv}
\mathcal{H}_{k,\ell} := \int_{(\R^d)^{2k+2-\ell}} h^k_1(\y_0, \y_1) h^k_1(\y_0, \y_2) \ind{x \in H(\theta_1) \cap H(\theta_2)} \gamma(x)^{-\alpha(2k+2-\ell)} \dif{x} \dif{\y}.
\end{equation}
\end{proof}

\subsection{Conditional variance} The following corollary about the conditional variance is crucial to comparing the speed of weak convergence in the final section. For ease of comparison, let us assume that $r_n \to r \in (0,\infty)$, for continuity of the analogy of the unconditional case. Determining the asymptotics for the case when $r_n \to 0$ would be simple, however. We state the results solely in terms of their asympotics as constants can be readily calculated from Propositions~\ref{p:mom_conv_exp} and \ref{p:mom_conv_exp_lite}. We only state the limit for the conditional variance when $t_n = o(n^{1/d})$. This obviates the need to state every regime and includes the rate of divergence of $t_n$ seen in the central limit theorems. 

\begin{corollary} \label{c:cond_var}
Suppose that $f$ has an exponentially-decaying tail and our point process $\Pn^\theta$ is conditioned to lie outside of the diverging halfspace $H_n(\theta)$ for some $\theta \in S^{d-1}$. Assume that $t_n = o(n^{1/d})$. Then when $\xi = 0 \text{ and } \beta < \infty$, we have
\begin{align*}
&\Var(S_{k,n}^*(\theta)) = \Theta\big( n^{2k+1}\big), \\
\shortintertext{and}
&\Var(S_{k,n}^*(\theta)) = \Theta\bigg( \frac{n^{2k+1}}{q_n^{2k}}\bigg), \quad \text{when } \xi > 0. \\
\shortintertext{If $g$ is nonincreasing and regularly varying with tail index $\alpha > d$ then}
&\Var(S_{k,n}^*(\theta)) = \Theta\bigg( \frac{n^{2k+1}}{t_n^{2dk}}\bigg).
\end{align*}
In the above, we allow the asymptotics to include the situation where the variance tends to 0.
\end{corollary}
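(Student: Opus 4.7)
The plan is to recognize $S_{k,n}^*(\theta)$ as the unconditional statistic of a suitably rescaled Poisson process and then invoke the third (``dense'') regime of each of Propositions~\ref{p:mom_conv_exp}, \ref{p:mom_conv_exp_lite}, and~\ref{p:mom_conv_rv}. Since $\Pn^\theta$ has intensity $nf_n = (n/\rho(H_n))f\ind{x \in H_n}$, it is equal in distribution to $\Pn' \cap H_n$, where $\Pn'$ is a Poisson process on $\R^d$ with intensity $m_nf$ for $m_n := n/\rho(H_n)$. Thus $S_{k,n}^*(\theta) \stackrel{d}{=} S_k(\Pn' \cap H_n, r_n)$, and its variance can be read off the relevant proposition with $n$ replaced by $m_n$.

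First I would pin down the order of $m_n$. Proposition~\ref{e:rho_Hn} gives $\rho(H_n) = \Theta(g(t_n)q_n)$ in the light-tailed case, whence $m_n = \Theta(n/(g(t_n)q_n))$. In the regularly varying case, the change of variables $x = t_ny$ and Potter's bound \eqref{e:potters_bd}---used exactly as in the proof of Proposition~\ref{p:mom_conv_rv}---give $\rho(H_n) = \Theta(t_n^d g(t_n))$, so $m_n = \Theta(n/(t_n^d g(t_n)))$.

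Next I would check which regime of each proposition applies after the substitution $n \mapsto m_n$. When $\xi > 0$, $q_n = \Theta(t_n^{(d-1)/2})$, so $m_n g(t_n) r_n^d = \Theta(n/q_n) \to \infty$ under $t_n = o(n^{1/d})$; this is the third case of Proposition~\ref{p:mom_conv_exp}, and the normalization $\tau_{k,n}$ from \eqref{e:tau_big} yields
\[
\Var(S_{k,n}^*(\theta)) = \Theta\big([m_n g(t_n)]^{2k+1} r_n^{2dk} q_n\big) = \Theta(n^{2k+1}/q_n^{2k}).
\]
When $\xi = 0$ and $\beta < \infty$, $m_n g(t_n) q_n \sim n/((2\pi)^{(d-1)/2}z_D(\theta)) \to \infty$, placing us in the third case of Proposition~\ref{p:mom_conv_exp_lite} (with $\tau_{k,n}$ as in \eqref{e:tau_lite}), so $\Var(S_{k,n}^*(\theta)) = \Theta([m_n g(t_n) q_n]^{2k+1}) = \Theta(n^{2k+1})$. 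In the heavy-tailed case, $m_n g(t_n) r_n^d = \Theta(n/t_n^d) \to \infty$, the third case of Proposition~\ref{p:mom_conv_rv} applies with $\upsilon_{k,n}$ from \eqref{e:tau_rv}, and
\[
\Var(S_{k,n}^*(\theta)) = \Theta\big([m_n g(t_n)]^{2k+1} r_n^{2dk} t_n^d\big) = \Theta(n^{2k+1}/t_n^{2dk}).
\]

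The main technical point---and the only one requiring real thought---is the Poisson rescaling that converts the conditional statistic into an unconditional one; after that everything is bookkeeping with the normalizations in \eqref{e:tau_big}, \eqref{e:tau_lite}, and \eqref{e:tau_rv}. The role of the hypothesis $t_n = o(n^{1/d})$ is precisely to force $m_n$ to be large enough that the dense regime of each proposition is selected, which collapses the three-regime statements of the underlying propositions into the three clean $\Theta$-estimates above. No new dominated-convergence or integrability arguments are needed, since those of Propositions~\ref{p:mom_conv_exp}, \ref{p:mom_conv_exp_lite}, and~\ref{p:mom_conv_rv} apply verbatim when $n$ is replaced by $m_n$.
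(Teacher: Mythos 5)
Your proposal is correct and is essentially the paper's own argument in more explicit clothing: the paper likewise gets the light-tailed order of $\rho(H_n)$ from Proposition~\ref{e:rho_Hn}, the heavy-tailed one from Potter's bounds, notes $q_n = o(t_n^d) = o(n)$ so that only the dense regime survives, and then reads the orders off the covariance expansions \eqref{e:cov_exp1}, \eqref{e:cov_exp_int}, \eqref{e:cov_exp_int_lite}, \eqref{e:cov_exp_rv}, which is exactly what your restriction/rescaling identification $\Pn^\theta \stackrel{d}{=} \Pn' \cap H_n$ with intensity $m_n f$, $m_n = n/\rho(H_n)$, accomplishes. One small fix: your claim $q_n = \Theta(t_n^{(d-1)/2})$ uses $a(t_n) \to \xi < \infty$ and fails when $\xi = \infty$ (which the case $\xi > 0$ includes); replace it with $q_n = a(t_n)^{(d+1)/2} t_n^{(d-1)/2} = o(t_n^d)$, which follows from $a(t)/t \to 0$ and, together with $t_n^d = o(n)$, still gives $m_n g(t_n) r_n^d = \Theta(n r_n^d/q_n) \to \infty$, as in the paper.
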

\begin{proof}
Note that $q_n = o(t_n^d)$, so this implies $q_n = o(n)$. The proof for the first two parts follows from Proposition~\ref{e:rho_Hn} and careful inspection of \eqref{e:cov_exp1}, \eqref{e:cov_exp_int}, and \eqref{e:cov_exp_int_lite}. For the heavy-tailed case, we see that 
\[
\rho(H_n) = g(t_n)t_n^d \int_{H} \frac{g(t_n\gamma(x))}{g(t_n)} \dif{x} \sim g(t_n)t_n^d \int_{H} \gamma^{-\alpha} \dif{x},
\]
upon application of Potter's bounds to demonstrate dominated convergence. Then we may follow the logic of the light-tailed case from the equation~\eqref{e:cov_exp_rv}.
\end{proof}

\section{Central limit theorems and asymptotic (in)dependence} \label{s:fidi}

In this section, we establish finite-dimensional weak convergence of 
\[
\big(S_{k,n}(\theta_1), \dots, S_{k,n}(\theta_m)\big),
\] 
when $g$ has an exponentially-decaying tail with $\xi > 0$, and when $g \in RV_{-\alpha}$. We also establish weak convergence of $S_{k, n}(\theta)$ when $\xi = 0$ and $\beta < \infty$, in the light tail case. For these proofs, we will apply a modification of Theorem 6.2 from \cite{reitzner2013} to obtain our bounds. We will use the Cram{\'e}r-Wold device to establish finite-dimensional convergence in the case that $\xi > 0$, \cite[][cf. Section 3.9]{durrett2010}. The Kolmogorov distance $d_K(X, Y)$ between two random variables $X$ and $Y$ is defined as 
\[
d_K(X,Y) := \sup_{x \in \R} \big | P(X \leq x) - P(Z \leq x) |.
\] 
If one our random variables is a standard normal random variable $Z$, then convergence in the Kolmogorov distance implies weak convergence to a standard normal distribution---i.e. a central limit theorem. Before continuing, we need a bound on the $L^4$ norm of $h^k_r$, subject to lying in the intersection of certain number of diverging halfspaces. 

\begin{lemma}\label{l:4th_bd}
For any density $f$ with light-tailed density generator $g = Ce^{-\psi}$, we have in general that 
\begin{align}
\int_{(\R^d)^{k+1}} \big[h^k_{r_n}(x_0, \dots, x_k)\big]^4 \prod_{i=0}^k \ind{x_i \in H_n(\theta_1) \cap \cdots \cap H_n(\theta_m)} f(x_i) \dif{x_i} = O\big( (g(t_n)q_n)^{k+1} \big),\label{e:4th_bd}
\end{align}
and when $\xi > 0$ the integral term in \eqref{e:4th_bd} satisfies
\[
O(g(t_n)^{k+1} r_n^{dk} q_n). 
\]
If $f$ is heavy-tailed with density generator $g$ that is nonincreasing and varies regularly with tail index $\alpha > d$ we have that the integral term in \eqref{e:4th_bd} satisfies
\[
O(g(t_n)^{k+1} r_n^{dk} t_n^d)
\]
\end{lemma}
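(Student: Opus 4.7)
The plan is to exploit two simple reductions. First, by property (H4) we have $h^k_{r_n} \leq M$, so $[h^k_{r_n}]^4 \leq M^3\, h^k_{r_n}$. Second, the multivariate halfspace indicator factorizes as
\[
\prod_{i=0}^k \ind{x_i \in H_n(\theta_1) \cap \cdots \cap H_n(\theta_m)} \leq \prod_{i=0}^k \ind{x_i \in H_n(\theta_1)}.
\]
Combining these, the integral in \eqref{e:4th_bd} is majorized by
\[
M^3 \int_{(\R^d)^{k+1}} h^k_{r_n}(x_0, \dots, x_k) \prod_{i=0}^k \ind{x_i \in H_n(\theta_1)} f(x_i) \dif{x_i} \; = \; \frac{M^3 (k+1)!}{n^{k+1}}\, \E[S_{k,n}(\theta_1)],
\]
where the equality is a direct consequence of the multivariate Mecke formula applied as in \eqref{e:av_hs}.

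From this reduction, the $\xi > 0$ light-tailed bound $O\bigl(g(t_n)^{k+1} r_n^{dk} q_n\bigr)$ is immediate from Proposition~\ref{p:mom_conv_exp}, which shows $\E[S_{k,n}(\theta_1)] = \Theta\bigl(n^{k+1} g(t_n)^{k+1} r_n^{dk} q_n\bigr)$. Similarly, the heavy-tailed bound $O\bigl(g(t_n)^{k+1} r_n^{dk} t_n^d\bigr)$ follows from Proposition~\ref{p:mom_conv_rv} (the proof there performs the substitution $x \mapsto t_n x$, contributing the Jacobian factor $t_n^d$ consistent with the normalization $\upsilon_{k,n}$ at \eqref{e:tau_rv}). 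For the general light-tailed bound $O\bigl((g(t_n) q_n)^{k+1}\bigr)$, the cleanest route is to use the even cruder estimate $[h^k_{r_n}]^4 \leq M^4$ and bypass the local determination of $h^k_{r_n}$ entirely, leaving
\[
M^4 \int_{(\R^d)^{k+1}} \prod_{i=0}^k \ind{x_i \in H_n(\theta_1)} f(x_i) \dif{x_i} \;=\; M^4 \rho(H_n(\theta_1))^{k+1},
\]
whereupon Proposition~\ref{e:rho_Hn} gives $\rho(H_n(\theta_1)) = \Theta\bigl(g(t_n) q_n\bigr)$. The same asymptotic extends to the $\xi = 0$, $\beta < \infty$ regime by specializing the argument of Proposition~\ref{p:mom_conv_exp_lite} to the trivial kernel $h^0 \equiv 1$, so no additional work is needed to cover every light-tailed subcase of interest.

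The lemma is thus essentially a repackaging of the moment asymptotics already established, so the ``main obstacle'' is really just bookkeeping: one must verify that the dominated-convergence arguments appearing in the proofs of Propositions~\ref{p:mom_conv_exp}, \ref{p:mom_conv_exp_lite}, and~\ref{p:mom_conv_rv} actually yield genuine $O$-bounds. This is automatic, since convergence of the normalized expectation to a finite positive limit implies $\Theta$-behavior (and hence $O$-behavior) of the unnormalized quantity.
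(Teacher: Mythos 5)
Your proof is correct and follows essentially the same route as the paper: bound the kernel via (H4), discard all but one halfspace indicator, and appeal to the established asymptotics (Proposition~\ref{e:rho_Hn} for the general $O\big((g(t_n)q_n)^{k+1}\big)$ bound, and the expectation/dominated-convergence estimates behind Propositions~\ref{p:mom_conv_exp} and~\ref{p:mom_conv_rv} for the $\xi>0$ and heavy-tailed cases), the paper simply re-running those dominated-convergence bounds with $M$ replaced by $M^4$ where you instead use $[h^k_{r_n}]^4 \leq M^3\, h^k_{r_n}$ and cite the proposition statements. One small caution: without the extra conditions of Remark~\ref{r:clt_varpos} the limit integrals there may vanish, so you are only entitled to $\E[S_{k,n}(\theta_1)] = O\big(n^{k+1}g(t_n)^{k+1}r_n^{dk}q_n\big)$ rather than $\Theta$ --- which is all the lemma needs anyway.
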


\begin{proof}
We may assume that $\theta_i \neq -\theta_j$ for any $1 \leq i < j \leq m$, as otherwise $H_n(\theta_i) \cap H_n(\theta_j) = \emptyset$. The first part follows from the upper bounds from (H4), 
\begin{equation} \label{e:hs_trivbound}
\ind{x_i \in H_n(\theta_1) \cap \cdots \cap H_n(\theta_m)} \leq \ind{x_i \in H_n(\theta_j)},
\end{equation}
and Proposition~\ref{e:rho_Hn}. The second part follows from the proof of dominated convergence in Proposition~\ref{p:mom_conv_exp} after the application of \eqref{e:hs_trivbound}, where we may replace $h^k_{r_n}$ with $[h^k_{r_n}]^4$ with no change to the proof besides replacing $M$ with $M^4$. The final part, for heavy tails, follows from the proof of dominated convergence in Proposition~\ref{p:mom_conv_rv}.
\end{proof}

Our aim is to prove central limit theorems for $S_{k,n}(\Pn)$ and $\sum_{i=1}^m a_iS_{k,n}(\theta_i)$ for non-zero scalars $a_1, \dots, a_m \in \R$. Without loss of generality we consider only $\sum_{i=1}^m a_iS_{k,n}(\theta_i)$ and denote it by $G_n(\Pn)$. For $G_n(\Pn)$ there exists a symmetric measurable map $h_n: (\R^d)^{k+1} \to [0, \infty)$ such that 
\begin{equation} \label{e:gnx}
G_n(\X) := \frac{1}{k+1!} \sum_{(x_0, \dots, x_k) \in \X^{k+1}_{\neq}} h_n(x_0, \dots, x_k),
\end{equation}
and satisfies (H2), (H3) and (H4). With little difficulty, one can see that for the specific case of $\sum_{i=1}^m a_iS_{k,n}(\theta_i)$ one has
\[
h_n(x_0, \dots, x_k) = h^k_{r_n}(x_0, \dots, x_k) \sum_{i=1}^m a_i \ind{(x_0, \dots, x_k) \in H_n(\theta_i)^{k+1}}.
\]
We define
\[
\big \lVert h_n^2 \big \rVert^2_f =  \int_{(\R^d)^{k+1}} h_n(x_0, \dots, x_k)^4 \prod_{i=0}^k f(x_i) \dif{x_i},
\]
and let $B_n(\kappa) = \sup_{y \in \R^d} n\rho\big(B(y, 4\kappa(n))\big)$, where in the specific instance here, $\kappa(n) = \kappa r_n$. Denoting
\[
\bar{G}_n(\Pn) = \big(G_n(\Pn) - \E[G_n(\Pn)]\big)/\sqrt{\Var\big(G_n(\Pn)\big)}
\] 
note that 
\begin{align*}
\Var(G_n(\Pn)) &= \sum_{i=1}^m \sum_{j=1}^m a_i a_j \Cov\big(S_{k,n}(\theta_i), S_{k,n}(\theta_j) \big).
\end{align*}
We will now state a version of Theorem 6.2 in \cite{reitzner2013}---updated using the Kolmogorov distance bound of Theorem 4.2 in \cite{schulte2016}---for a generic probability density $f$. 

\begin{theorem} \label{t:pu}
Let $Z$ denote a standard normal random variable and $d_K$ the Kolmogorov distance and suppose that $\Pn$ is a Poisson process on $\R^d$ with intensity $nf$, where $f$ is any bounded, a.e. continuous density. Let $G_n(\Pn)$ be defined as at \eqref{e:gnx} and let $h_n$ be any nonnegative symmetric measurable map satisfying (H2), (H3), and (H4) (where $\kappa$ may depend on $n$). If $\Var(G_n(\Pn)) > 0$, then
\begin{align}
d_K\Big( \bar{G}_n(\Pn), Z \Big) &\leq c_k \frac{n^{(k+1)/2} \big(1 \vee B_n(\kappa)^{3k/2} \big) \big \lVert h_n^2 \big \rVert_f }{\Var\big(G_n(\Pn)\big)}, \label{e:wass_bound}
\end{align}
where $c_k > 0$ depends only on $k$.
\end{theorem}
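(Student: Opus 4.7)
The plan is to mirror the strategy of Theorem 6.2 in Reitzner--Schulte \cite{reitzner2013}, substituting the sharper Malliavin--Stein Kolmogorov bound of Schulte \cite{schulte2016} in place of the Wasserstein bound originally used. The starting point is the abstract bound from \cite{schulte2016}, which controls $d_K(\bar G_n(\Pn), Z)$ by a finite sum of square roots of ``contraction'' integrals
\[
M_{ij}(h_n) = \int_{(\R^d)^{2k+2-i-j}} \Big( \int_{(\R^d)^i} h_n(\x, \z) h_n(\y, \z) \, n^i f^{\otimes i}(\dif \z) \Big)^{\!2} n^{2k+2-i-j} f^{\otimes (2k+2-i-j)} (\dif(\x,\y)),
\]
for $0 \leq j \leq i \leq k+1$ with $(i,j) \neq (k+1,k+1)$, together with an additional Kolmogorov-specific term that, by inspection of \cite{schulte2016}, is of the same form up to a factor.

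First I would reduce each $M_{ij}$ to an estimate in the two quantities $\|h_n^2\|_f$ and $B_n(\kappa)$. This is exactly the reduction step in the proof of Theorem 6.2 of \cite{reitzner2013}, and the locality property (H2) is what makes it work: if $h_n(\x,\z) > 0$ and $h_n(\y,\z)>0$, then every coordinate of $\x$ and $\y$ lies within distance $2\kappa r_n$ of the coordinates of $\z$, and therefore within distance $4\kappa r_n$ of each $x_0$. Iterating the bound $n\int \mathbf{1}\{\|u - v\| \leq 4\kappa r_n\} f(v)\dif v \leq B_n(\kappa)$ for each of the $2k+1-i-j$ ``free'' coordinates (relative to a chosen base point), and then using the uniform bound (H4) inside one copy of $h_n$ to convert $h_n^2$ into $h_n^2 \cdot M^2$, one obtains
\[
M_{ij}(h_n) \;\leq\; C_k \, n^{-(2k+2-i-j)} \, B_n(\kappa)^{2k+1-i-j} \, n^{k+1}\|h_n^2\|_f^{\,2}.
\]
This is the standard ``locality contraction'' calculation and is essentially bookkeeping once (H2) and (H4) are in force.

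Next I would take square roots, multiply by the prefactors from the Malliavin--Stein bound, and take the dominant term over all admissible $(i,j)$. The exponent of $B_n(\kappa)$ is maximised at $i=j=1$, giving $B_n(\kappa)^{2k-1}$ under the square root, i.e.\ $B_n(\kappa)^{k-1/2}$; in the combined bound one finds an additional factor of $B_n(\kappa)$ coming from the Kolmogorov-specific correction term in \cite{schulte2016} (of the shape $\sqrt{n\int h_n^4\, \dif \rho}\cdot B_n(\kappa)/\Var(G_n(\Pn))^{...}$), which pushes the exponent up to $3k/2$; the $1\vee(\cdot)$ safeguards the regime where $B_n(\kappa) < 1$. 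Combining everything gives
\[
d_K(\bar G_n(\Pn), Z) \;\leq\; c_k \, \frac{n^{(k+1)/2}\,(1 \vee B_n(\kappa)^{3k/2})\,\|h_n^2\|_f}{\Var(G_n(\Pn))},
\]
as claimed.

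The main obstacle, and the only nontrivial point beyond \cite{reitzner2013}, is verifying that the extra contribution present in the Kolmogorov bound of Schulte \cite{schulte2016}---which is absent from the original Wasserstein statement---also reduces, under (H1)--(H4), to a term of the same functional form in $\|h_n^2\|_f$ and $B_n(\kappa)$, with at worst the exponent $3k/2$ on $B_n(\kappa)$. Checking this amounts to applying the same locality/(H4) argument to the additional ``fourth-moment-like'' integral in \cite[Theorem 4.2]{schulte2016}; all other steps are routine specialisations of the Reitzner--Schulte argument and do not require reproving anything about Malliavin calculus on Poisson space.
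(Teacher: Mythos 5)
Your proposal follows essentially the same route as the paper: the paper's proof simply runs the argument of Theorem~6.2 of \cite{reitzner2013} with the Kolmogorov bound of Theorem~4.2 of \cite{schulte2016} substituted for the Wasserstein bound, checking only that $G_n$ is a local, absolutely convergent $U$-statistic (via (H2), (H4) and $\E[G_n(\Pn)^2]<\infty$), which is exactly the reduction you sketch in more detail. The one small item the paper records that you should not omit is the verification of square-integrability $\E[G_n(\Pn)^2]<\infty$ (from (H4) and the a.s.\ finiteness of $\Pn(\R^d)$), needed to apply the cited theorems at all.
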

\begin{proof}
We follow the proof of Theorem 6.2 in \cite{reitzner2013} but begin with the result of Theorem 4.2 in \cite{schulte2016} rather than the Wasserstein distance bound of the former article. By the properties of $h_n$, $G_n$ is a local $U$-statistic according to the definition of \cite{reitzner2013}. It is also absolute convergent, in the sense that $\E[G_n(\Pn)^2] < \infty$, by property (H4) and the fact that $\Pn(\R^d)$ is almost surely finite. 
\end{proof}

\subsection{Light tails}
We can now give the central limit theorem finite dimensional-convergence when $\xi > 0$ in our light-tailed case. Recall that the condition below on $\mathcal{I}_{k, \ell} > 0$ is often easy to satisfy---see Remark~\ref{r:clt_varpos}.

\begin{theorem}\label{t:fidi_bigtail}
Supposing that $f$ has an exponentially-decaying tail with density generator $g$ and $\xi > 0$. Assume that $nr_n^d \to \infty$ and suppose that $\mathcal{I}_{k,\ell} > 0$ for all $\ell = 1, \dots, k+1$ and all $\theta \in S^{d-1}$. Then for any $m \in \N$ and $\theta_1, \dots, \theta_m \in S^{d-1}$ we have the result that 
\[
d_K\Big( \bar{G}_n(\Pn), Z \Big) = O\bigg(\frac{1}{\sqrt{nq_ng(t_n)^{3k+1}}}\bigg).
\]
Thus for $\tau_{k,n}$ as defined at \eqref{e:tau_big} it is sufficient that $nq_ng(t_n)^{3k+1} \to \infty$ as $n \to \infty$ for 
\[
\tau_{k,n}^{-1/2} \Big(S_{k,n}(\theta_1)-\E[S_{k,n}(\theta_1)] , \dots, S_{k,n}(\theta_m) - \E[S_{k,n}(\theta_m)]\Big) \Rightarrow \big(W(\theta_i)\big)_{i=1}^m, \quad n \to \infty,
\]
where $W(\theta_1), \dots, W(\theta_m)$ are independent meanzero Gaussians with 
\[
\Var(W(\theta)) = \lim_{n\to \infty} \tau_{k,n}^{-1} \Var(S_{k,n}(\theta)).
\]
\end{theorem}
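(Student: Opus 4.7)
The plan is to fix a nonzero vector $(a_1,\dots,a_m)\in\R^m$, apply Theorem~\ref{t:pu} to the scalar local $U$-statistic
\[
G_n(\Pn)=\sum_{i=1}^m a_i S_{k,n}(\theta_i),
\]
and then lift the resulting univariate Kolmogorov bound to finite-dimensional convergence via Cram\'er--Wold. The corresponding symmetric kernel is
\[
h_n(x_0,\dots,x_k)=h^k_{r_n}(x_0,\dots,x_k)\sum_{i=1}^m a_i\,\ind{(x_0,\dots,x_k)\in H_n(\theta_i)^{k+1}},
\]
which is locally determined with the same radius $\kappa(n)=\kappa r_n$ as $h^k_{r_n}$ (so (H2) holds) and is uniformly bounded by $M\sum_i|a_i|$ (so (H4) holds). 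Since $g$ is continuous and nonincreasing, $f$ is bounded, so a direct volume estimate gives $B_n(\kappa)=O(nr_n^d)$.

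I would then estimate the three ingredients appearing in \eqref{e:wass_bound}. Expanding the fourth power of $\sum_i a_i\ind{\cdot}$ produces at most $m^4$ cross terms, each yielding an integral bounded by Lemma~\ref{l:4th_bd} applied to the intersection $\bigcap_{s=1}^4 H_n(\theta_{i_s})$; this gives
\[
\lVert h_n^2\rVert_f^2=O\!\big(g(t_n)^{k+1}r_n^{dk}q_n\big).
\]
Next, Proposition~\ref{p:mom_conv_exp} yields $\Cov(S_{k,n}(\theta_i),S_{k,n}(\theta_j))=0$ when $\theta_i=-\theta_j$ and $o(\tau_{k,n})$ when $\theta_i\neq\pm\theta_j$, whence
\[
\Var(G_n(\Pn))\sim\sum_{i=1}^m a_i^2\,\Var(S_{k,n}(\theta_i))=\Theta(\tau_{k,n}),
\]
with positivity of the limit ensured by the hypothesis $\mathcal{I}_{k,\ell}>0$. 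Combining these estimates with $B_n(\kappa)^{3k/2}=O\!\big((nr_n^d)^{3k/2}\big)$ in \eqref{e:wass_bound} gives
\[
d_K\big(\bar{G}_n(\Pn),Z\big)=O\!\left(\frac{n^{(k+1)/2}\,(nr_n^d)^{3k/2}\sqrt{g(t_n)^{k+1}r_n^{dk}q_n}}{\tau_{k,n}}\right),
\]
and a direct calculation in each of the three cases of \eqref{e:tau_big} collapses this to $O\!\big((nq_ng(t_n)^{3k+1})^{-1/2}\big)$: the intermediate and $ng(t_n)r_n^d\to\infty$ regimes saturate the bound, while $ng(t_n)r_n^d\to 0$ picks up an extra vanishing factor $(ng(t_n)r_n^d)^k$.

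Once the univariate Kolmogorov bound is in place, the assumption $nq_ng(t_n)^{3k+1}\to\infty$ yields $\bar{G}_n(\Pn)\Rightarrow Z$ for every choice of $(a_1,\dots,a_m)$. Combined with $\Var(G_n(\Pn))/\tau_{k,n}\to\sum_i a_i^2\Var(W(\theta_i))$, this is precisely the statement that $\tau_{k,n}^{-1/2}\sum_i a_i(S_{k,n}(\theta_i)-\E[S_{k,n}(\theta_i)])$ converges in distribution to a centered Gaussian with variance $\sum_i a_i^2\Var(W(\theta_i))$, matching the law of $\sum_i a_i W(\theta_i)$ for \emph{independent} $W(\theta_i)$; Cram\'er--Wold then delivers the joint convergence. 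I expect the main obstacle to be securing the lower bound $\Var(G_n(\Pn))=\Theta(\tau_{k,n})$---this is precisely where the hypothesis $\mathcal{I}_{k,\ell}>0$ rules out accidental degeneracy of the limiting variance for some linear combination---while the bookkeeping through the three regimes of \eqref{e:tau_big} provides a secondary, more algebraic obstacle.
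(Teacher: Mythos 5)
Your proposal is correct and follows essentially the same route as the paper: apply Theorem~\ref{t:pu} with $B_n(\kappa)=O(nr_n^d)$ and the fourth-moment bound of Lemma~\ref{l:4th_bd}, use Proposition~\ref{p:mom_conv_exp} to get $\Var(G_n(\Pn))\sim\sum_i a_i^2\Var(S_{k,n}(\theta_i))=\Theta(\tau_{k,n})$ (positive by $\mathcal{I}_{k,\ell}>0$), run the case analysis over the three regimes of \eqref{e:tau_big} (your observation about the extra vanishing factor $(ng(t_n)r_n^d)^k$ in the sparse regime matches Remark~\ref{r:clt_sparse}), and finish with Cram\'er--Wold. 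No gaps worth flagging beyond those already present in the paper's own treatment.
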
 
\begin{proof}
First, we will consider the numerator in \eqref{e:wass_bound}. Let $C^*$ be a generic positive constant that may vary across lines. As $f$ is bounded, we can bound $B_n(\kappa)$ above by $C^* nr_n^d$. Here, $B_n(\kappa)^{3k/2}$ is greater than 1 eventually. Thus, for $n$ large enough we may bound the numerator of \eqref{e:wass_bound} by
\[
C^*\big([ng(t_n)]^{k+1}r_n^{dk}q_n \big)^{1/2}(nr_n^{d})^{3k/2},
\]
by Lemma~\ref{l:4th_bd}; now the constant $C^*$ depends on $m$. By Proposition~\ref{p:mom_conv_exp} it evident that 
\[
\tau_{k,n}^{-1}\Var(G_n(\Pn)) \sim \tau_{k,n}^{-1} \sum_{i=1}^{m} a_i^2 \Var(S_{k,n}(\theta_i).
\]
Let us consider first the case where $ng(t_n)r_n^d \to 0$ as $n \to \infty$, so that $\tau_{k,n} = [ng(t_n)]^{k+1}r_n^{dk}q_n$. By the assumed positivity of $\mathcal{I}_{k, \ell}$ we have that for large enough $n$ that \eqref{e:wass_bound} is bounded above by 
\[
C^*\frac{(nr_n^{d})^{3k/2}}{\sqrt{[ng(t_n)]^{k+1}r_n^{dk}q_n }},
\]
simplifying terms and multiplying by $[g(t_n)/g(t_n)]^k$ yields our result. In the case of $ng(t_n)r_n^d \to \chi$, the normalization term for the variance is $\tau_{k,n} = ng(t_n)q_n$, and \eqref{e:wass_bound} is bounded above by 
\[
C^*\frac{(nr_n^{d})^{3k/2}}{\sqrt{ng(t_n)q_n }} = C^*\frac{(ng(t_n)r_n^{d})^{3k/2}}{\sqrt{nq_ng(t_n)^{3k+1} }}.
\]
Finally, in the case that $ng(t_n)r_n^d \to \infty$, we have that $\tau_{k,n} = [ng(t_n)]^{2k+1}r_n^{2dk}q_n$ and we can bound \eqref{e:wass_bound} above by
\[
C^*\frac{1}{\sqrt{nq_ng(t_n)^{3k+1}}},
\]
after cancelling terms.
\end{proof}

\begin{figure}[b]
\centering
\includegraphics[width=2.25in]{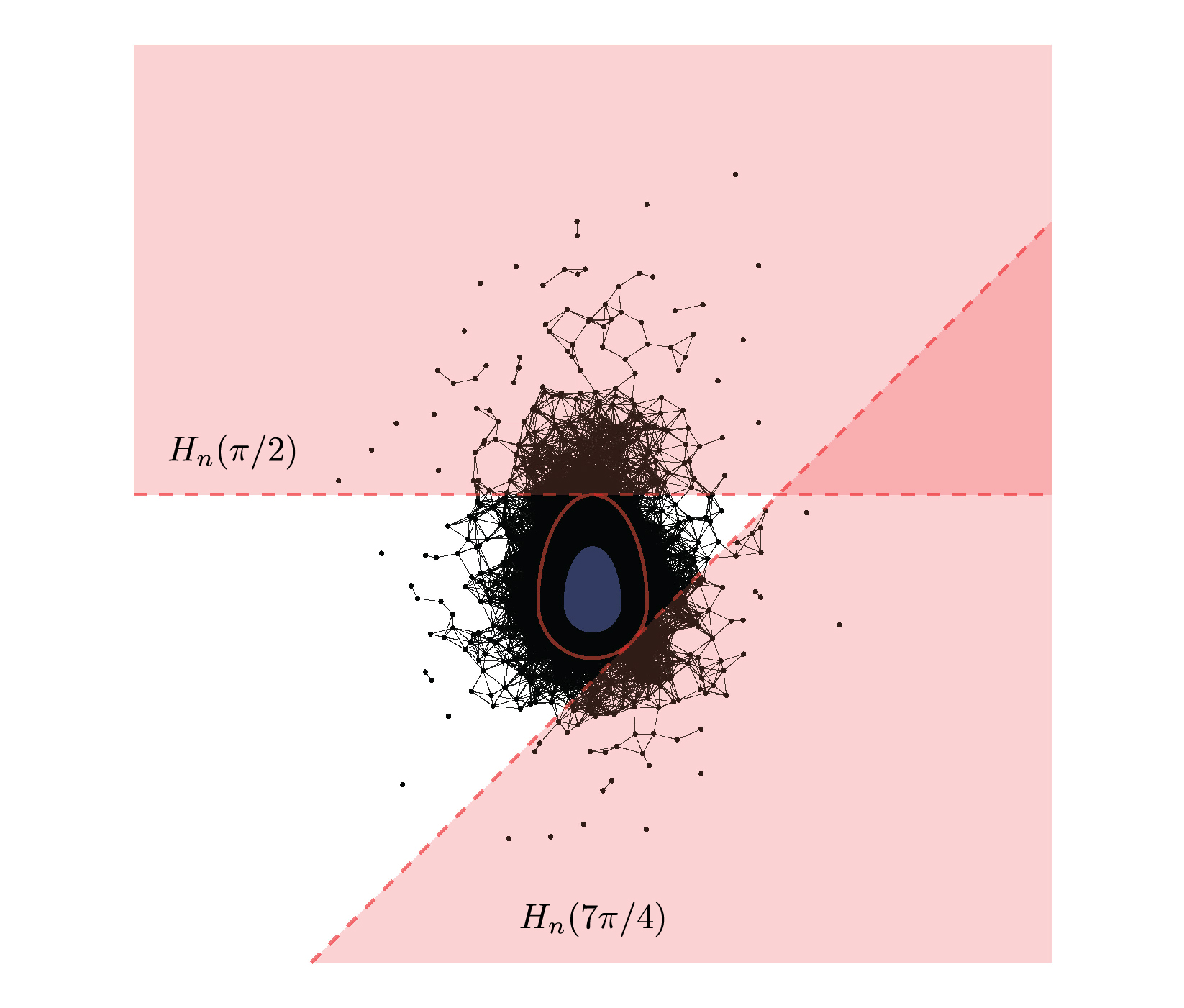}
\caption{A realization of the random geometric graph $G(\Pn, 2)$ where $t_n = \log n^{1/4}$ and $n = 2000$. Identifying $S^1$ with $[0, 2\pi)$, the outer support halfspaces $H_n(\theta)$ for $\theta = \pi/2$ and $\theta=7\pi/4$ are shown in red. Here we take $\gamma = \gamma_{\text{egg}}$, and $\psi$ to be the identity function. The set $\{ \gamma_{\text{egg}} < 1\}$ is depicted in blue. }
\label{f:egg_asymp}
\end{figure}


We see a sample from our setup in Figure~\ref{f:egg_asymp}. Theorem~\ref{t:fidi_bigtail} says that as $n \to \infty$ the number of edges in $H_n(\pi/2)$ is asymptotically independent of the number of edges in $H_n(7\pi/4)$. This can be seen because the $H_n(\pi/2) \cap H_n(7\pi/4)$ contains no edges whereas each individual region contains hundreds. Heuristically, these regions are ``asymptotically disjoint'', which implies independence of the $S_{1,n}(\theta)$ via the Poisson assumption.

\begin{remark}\label{r:clt_sparse}
In the case when $ng(t_n)r_n^d \to 0$ and $nr_n^d \to \infty$, the Kolmogorov distance satisfies 
\[
O\bigg(\frac{[nr_n^dg(t_n)]^k}{\sqrt{nq_ng(t_n)^{3k+1}}}\bigg),
\]
so that in fact $\liminf_{n \to \infty} nq_ng(t_n)^{3k+1} > 0$ suffices for a central limit theorem to hold.
\end{remark}

We will now give the rate of convergence in Kolmogorov distance for the central limit theorem in the ``light-tailed'' case. Remember that in this case we have assumed $r_n \to r \in (0, \infty)$.
\begin{theorem}\label{t:fidi_litetail}
Supposing that $f$ has an exponentially-decaying tail with density generator $g$ and $\xi = 0$ and $\beta \in [0, \infty)$. Suppose that $\mathcal{I}_{k,1} > 0$ for all $\theta \in S^{d-1}$. Then for any $m \in \N$ and $\theta_1, \dots, \theta_m \in S^{d-1}$ we have the result that 
\[
d_K\Big( \bar{G}_n(\Pn), Z \Big) = O\bigg(\frac{1}{\sqrt{n[q_ng(t_n)]^{3k+1}}} \bigg).
\]
Therefore, if $n[g(t_n)q_n]^{3k+1} \to \infty$
\[
(ng(t_n)q_n)^{-(k+1/2)} \Big(S_{k,n}(\theta_1)-\E[S_{k,n}(\theta_1)] , \dots, S_{k,n}(\theta_m) - \E[S_{k,n}(\theta_m)]\Big) \Rightarrow \big(W(\theta_i))_{i=1}^m, \quad n \to \infty,
\]
where $W(\theta_1), \dots, W(\theta_m)$ are independent meanzero Gaussians with 
\[
\Var(W(\theta)) = \mathcal{I}_{k, 1}(\theta)/(k!)^2,
\]
where $\mathcal{I}_{k, 1}$ is defined at \eqref{e:istar_kl}.
\end{theorem}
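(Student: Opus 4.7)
The plan is to mirror the strategy of Theorem~\ref{t:fidi_bigtail}, substituting the lighter-tail moment asymptotics of Proposition~\ref{p:mom_conv_exp_lite}. By the Cram{\'e}r-Wold device it suffices to show that, for any real scalars $a_1, \dots, a_m$ not all zero, the linear combination $G_n(\Pn) = \sum_{i=1}^m a_i S_{k,n}(\theta_i)$ obeys an appropriately scaled CLT. I would write $G_n$ as a local $U$-statistic of the form \eqref{e:gnx} with symmetric kernel
\[
h_n(x_0, \dots, x_k) = h^k_{r_n}(x_0, \dots, x_k) \sum_{i=1}^m a_i \ind{(x_0, \dots, x_k) \in H_n(\theta_i)^{k+1}},
\]
which inherits (H2)--(H4) uniformly in $n$ since $r_n \to r \in (0,\infty)$, rendering Theorem~\ref{t:pu} applicable.

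For the numerator in \eqref{e:wass_bound}, the boundedness of $f$ and the convergence $r_n \to r$ yield $B_n(\kappa) \leq C^* n r_n^d = O(n)$, so $B_n(\kappa)^{3k/2} = O(n^{3k/2})$. Expanding $h_n^4$ produces a finite linear combination of terms of the form $[h^k_{r_n}]^4 \prod_{j=0}^k \ind{x_j \in H_n(\theta_{i_j})}$, each of which may be crudely bounded by $[h^k_{r_n}]^4 \prod_j \ind{x_j \in H_n(\theta_{i_0})}$ for any single chosen index $i_0$. The \emph{general} light-tail estimate of Lemma~\ref{l:4th_bd} then delivers $\lVert h_n^2 \rVert_f = O\big([g(t_n)q_n]^{(k+1)/2}\big)$. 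Combining gives a numerator of order $n^{(4k+1)/2}[g(t_n)q_n]^{(k+1)/2}$.

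For the denominator, Proposition~\ref{p:mom_conv_exp_lite} yields $\tau_{k,n}^{-1}\Cov(S_{k,n}(\theta_i), S_{k,n}(\theta_j)) \to 0$ whenever $i \neq j$, while the hypothesis $\mathcal{I}_{k,1}(\theta_i) > 0$, combined with the dense regime $ng(t_n)q_n \to \infty$ (the regime implicit in the prescribed normalization $(ng(t_n)q_n)^{-(k+1/2)}$), delivers $\Var(S_{k,n}(\theta_i)) \sim \mathcal{I}_{k,1}(\theta_i)[ng(t_n)q_n]^{2k+1}/(k!)^2$. Hence $\Var(G_n(\Pn)) \asymp [ng(t_n)q_n]^{2k+1}$, and taking the ratio of the numerator to this variance produces the advertised bound
\[
d_K(\bar{G}_n(\Pn), Z) = O\bigg(\frac{1}{\sqrt{n[g(t_n)q_n]^{3k+1}}}\bigg).
\]

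To conclude, $n[g(t_n)q_n]^{3k+1} \to \infty$ forces $\bar{G}_n(\Pn) \Rightarrow Z$, and Slutsky's lemma combined with $\Var(G_n(\Pn))/[ng(t_n)q_n]^{2k+1} \to \sum_i a_i^2 \mathcal{I}_{k,1}(\theta_i)/(k!)^2$ upgrades this to weak convergence of $(ng(t_n)q_n)^{-(k+1/2)}(G_n(\Pn) - \E[G_n(\Pn)])$ to a mean-zero Gaussian whose variance matches that of $\sum_i a_i W(\theta_i)$ with $W(\theta_i)$ independent and $\Var(W(\theta_i)) = \mathcal{I}_{k,1}(\theta_i)/(k!)^2$. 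The Cram{\'e}r-Wold device then identifies the joint limit as claimed. I anticipate no deep obstacles; the only care points are (i) verifying that the general (rather than the sharper $\xi > 0$) case of Lemma~\ref{l:4th_bd} is what is applied here, since $\xi = 0$ renders the latter bound inappropriate, and (ii) selecting the dense regime of $\tau_{k,n}$ in \eqref{e:tau_lite} consistent with the prescribed normalization.
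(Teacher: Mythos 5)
Your proposal is correct and follows essentially the same route as the paper: Theorem~\ref{t:pu} with $B_n(\kappa)^{3k/2}=O(n^{3k/2})$, the general (first) bound of Lemma~\ref{l:4th_bd} giving $\lVert h_n^2\rVert_f = O([g(t_n)q_n]^{(k+1)/2})$, the variance asymptotics of Proposition~\ref{p:mom_conv_exp_lite} in the dense regime (which, as the paper notes, is forced by $n[g(t_n)q_n]^{3k+1}\to\infty$ since $g(t_n)q_n\to 0$), and Cram\'er--Wold with vanishing cross-covariances to identify the independent Gaussian limit. Your two flagged care points coincide with exactly what the paper does, so no gap remains.
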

\begin{proof}
If $n[g(t_n)q_n]^{3k+1} \to \infty$ as $n \to \infty$, then $ng(t_n)q_n \to \infty$, as $q_ng(t_n)$ can be shown to converge to $0$. Recall that $C^*$ is a positive constant that may vary between lines, as in the proof of Theorem~\ref{t:fidi_bigtail}. Let us take $r_n \equiv r$, thus $1 \vee B_n(\kappa)^{3k/2} \leq C^*n^{3k/2}$ for large enough $n$. Using the first bound in Lemma~\ref{l:4th_bd}, we have that the numerator of \eqref{e:wass_bound} is bounded above by 
\begin{equation} \label{e:num_bd_gauss}
C^*n^{2k+1/2}[g(t_n)q_n]^{(k+1)/2}.
\end{equation}
Thus, by the convergence of the variance and positivity of $\mathcal{I}_{k,1} > 0$ and \eqref{e:num_bd_gauss} eventually yields an upper bound on \eqref{e:wass_bound} of 
\[
C^*\frac{n^{2k+1/2}[g(t_n)q_n]^{(k+1)/2}}{[ng(t_n)q_n]^{2k+1}} = \frac{C^*}{\sqrt{n[q_ng(t_n)]^{3k+1}}}.
\]
(For a more detailed proof, see the proof of Theorem~\ref{t:fidi_bigtail}).
\end{proof}

%

We are finally in a position to prove Corollary~\ref{c:asymp_ind}.

\begin{proof}[Proof of Corollary~\ref{c:asymp_ind}]
Let $\mu_{i,n} = \E[S_{k,n}(\theta_i)]$ and $\sigma_{i,n} =\sqrt{\Var(S_{k,n}(\theta_i)}$ for $i=1,2$. The probability 
\begin{align*}
\P\big( S_{k,n}(\theta_1) \leq t_{1,n}, S_{k,n}(\theta_2) \leq t_{2,n} \big) Y&= \P\Bigg( \frac{S_{k,n}(\theta_1) - \mu_{1,n}}{\sigma_{1,n}} \leq t_1, \frac{S_{k,n}(\theta_2) - \mu_{2,n}}{\sigma_{2,n}} \leq t_2 \Bigg) \\
&\approx \Phi(t_1)\Phi(t_2),
\end{align*}
where $\Phi$ is the standard normal distribution function. Thus for any $\epsilon > 0$ and $n$ large enough we have by Theorem~\ref{t:fidi_bigtail} or \ref{t:fidi_litetail} that
\begin{align*}
&\Big| \P\big( S_{k,n}(\theta_1) \leq t_{1,n}, S_{k,n}(\theta_2) \leq t_{2,n} \big) - \P(S_{k,n}(\theta_1) \leq t_{1,n})\P(S_{k,n}(\theta_2) \leq t_{2,n}\big)\Big| \\
&\qquad \leq  \Big| \Phi( t_1 )\Phi (t_2) - \P(S_{k,n}(\theta_1) \leq t_{1,n})\P(S_{k,n}(\theta_2) \leq t_{2,n}\big)\Big| + \epsilon \\
&\qquad \leq  \Big| \Phi(t_1) - \P(S_{k,n}(\theta_1) \leq t_{1,n})\Big| + \Big| \Phi(t_2)- \P(S_{k,n}(\theta_2) \leq t_{2,n})\Big| + \epsilon \\
&\qquad \leq 3\epsilon.
\end{align*}
\end{proof}

\subsection{Heavy tails}

From the representation in \eqref{e:ikl_rv}, we may associate a measure to each $\mathcal{H}_{k,\ell}$ in \eqref{e:ikl_rv} in Proposition~\eqref{p:mom_conv_rv}. Speciifically, we may define the measure $\nu_{h,\ell}$ on Borel sets $A$ of $\R^d$ by
\begin{equation}\label{e:nu_hl}
\nu_{h,\ell}(A) := \int_{(\R^d)^{2k+2-\ell}} h^k_1(\y_0, \y_1) h^k_1(\y_0, \y_2) \ind{x \in A} \gamma(x)^{-\alpha(2k+2-\ell)} \dif{x} \dif{\y},
\end{equation}
From Proposition~\ref{p:mom_conv_rv} we can see that in the case of heavy tails, asymptotic independence is no longer the case. For completeness, we demonstrate a finite-dimensional central limit theorem for the heavy tail case as well. 

\begin{theorem} \label{t:fidi_heavy}
Supposing that $f$ has a heavy tail with density generator $g$ that is nondecreasing and regularly varying with tail index $\alpha > d$. Assume that $nr_n^d \to \infty$ and suppose that $\mathcal{H}_{k,\ell} > 0$ for all $\ell = 1, \dots, k+1$ and all $\theta \in S^{d-1}$. Then for any $m \in \N$ and $\theta_1, \dots, \theta_m \in S^{d-1}$ we have the result that 
\[
d_K\Big( \bar{G}_n(\Pn), Z \Big) = O\bigg(\frac{1}{\sqrt{nt_n^dg(t_n)^{3k+1}}}\bigg).
\]
Thus for $\upsilon_{k,n}$ as defined at \eqref{e:tau_big} it is sufficient that $nt_n^dg(t_n)^{3k+1} \to \infty$ as $n \to \infty$ for 
\[
\upsilon_{k,n}^{-1/2} \Big(S_{k,n}(\theta_1)-\E[S_{k,n}(\theta_1)] , \dots, S_{k,n}(\theta_m) - \E[S_{k,n}(\theta_m)]\Big) \Rightarrow \big(\mathcal{G}(\theta_i)\big)_{i=1}^m, \quad n \to \infty,
\]
where $\mathcal{G} = (\mathcal{G}(\theta), \, \theta \in S^{d-1})$ is a Gaussian process with covariance function 
\[
C(\theta_1, \theta_2) := \nu_h\big(H(\theta_1) \cap H(\theta_2)\big),
\]
and $\nu_h$ is an absolutely continuous control measure defined by
\[
\nu_h(A) :=
\begin{cases}
\frac{\nu_{h, k+1}(A)}{(k+1)!} &\mathrm{if} \ \ ng(t_n)r_n^d \to 0, \vspace{5.5pt} \\ 
\sum_{\ell=1}^{k+1} \frac{\chi^{2k+1-\ell} \nu_{h,\ell}(A)}{\ell! \big((k+1-\ell)!\big)^2} &\mathrm{if} \ \ ng(t_n)r_n^d \to \chi \in (0, \infty), \\
\frac{\nu_{h,1}(A)}{(k!)^2} &\mathrm{if} \ \ ng(t_n)r_n^d \to \infty,
\end{cases},
\]
for any Borel set $A \in \R^d$, where $\nu_{h, \ell}$ is defined at \eqref{e:nu_hl}.
\end{theorem}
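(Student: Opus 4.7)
The plan is to mirror the strategy used for Theorem~\ref{t:fidi_bigtail}, combining the Cram{\'e}r--Wold device with the Malliavin--Stein Kolmogorov bound of Theorem~\ref{t:pu}, while leveraging the heavy-tail moment asymptotics from Proposition~\ref{p:mom_conv_rv} and the heavy-tail $L^4$ bound in Lemma~\ref{l:4th_bd}. First I would fix nonzero scalars $a_1,\dots,a_m \in \R$ and angles $\theta_1,\dots,\theta_m \in S^{d-1}$ and form $G_n(\Pn) = \sum_{i=1}^m a_i S_{k,n}(\theta_i)$, which, as the passage notes, is itself a local $U$-statistic whose kernel inherits (H2)--(H4). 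Then Theorem~\ref{t:pu} applies and the task reduces to bounding
\[
\frac{n^{(k+1)/2}\bigl(1 \vee B_n(\kappa)^{3k/2}\bigr) \lVert h_n^2 \rVert_f}{\Var(G_n(\Pn))}.
\]

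For the numerator, since $f$ is bounded we have $B_n(\kappa) \leq C^* n r_n^d$, which is eventually greater than $1$ under $nr_n^d \to \infty$. The heavy-tail part of Lemma~\ref{l:4th_bd} gives $\lVert h_n^2 \rVert_f^2 = O(g(t_n)^{k+1} r_n^{dk} t_n^d)$, so the numerator is bounded above by $C^* n^{(k+1)/2}(nr_n^d)^{3k/2}\bigl(g(t_n)^{k+1} r_n^{dk} t_n^d\bigr)^{1/2}$. For the denominator, bilinearity of the covariance plus Proposition~\ref{p:mom_conv_rv} yield $\Var(G_n(\Pn)) \sim \upsilon_{k,n} \sum_{i,j} a_i a_j \lim_{n \to \infty} \upsilon_{k,n}^{-1} \Cov\bigl(S_{k,n}(\theta_i), S_{k,n}(\theta_j)\bigr)$, and the positivity hypothesis on $\mathcal{H}_{k,\ell}$ combined with the Gram-matrix structure of the limiting covariance (see the next paragraph) ensures this is eventually at least $c\,\upsilon_{k,n}$ for some $c>0$. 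Splitting into the three regimes of $\upsilon_{k,n}$ at \eqref{e:tau_rv} and simplifying the ratio produces the claimed $O\bigl(1/\sqrt{nt_n^d g(t_n)^{3k+1}}\bigr)$ bound in each case, just as in the proof of Theorem~\ref{t:fidi_bigtail}.

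For the limiting Gaussian process, the Cram{\'e}r--Wold step is automatic once we identify the joint covariance. I would unpack \eqref{e:ikl_rv} to observe that the indicator $\ind{x \in H(\theta_1) \cap H(\theta_2)}$ factors out from the rest of the integrand; integrating over the auxiliary variables $y_1, \dots, y_{2k+1-\ell}$ produces a Lebesgue density on $\R^d$, and recognizing this density gives exactly $\nu_{h,\ell}\bigl(H(\theta_1) \cap H(\theta_2)\bigr)$ as in \eqref{e:nu_hl}. Assembling these pieces with the multinomial weights dictated by \eqref{e:tau_rv} gives the control measure $\nu_h$ stated in the theorem, and the representation $C(\theta_1,\theta_2) = \int \ind{x \in H(\theta_1)}\ind{x \in H(\theta_2)} \, \nu_h(\dif x)$ makes positive semi-definiteness transparent, guaranteeing the existence of the Gaussian process $\mathcal{G}$.

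The main obstacle I anticipate is the clean bilinear identification $\Cov(S_{k,n}(\theta_1), S_{k,n}(\theta_2))/\upsilon_{k,n} \to \nu_h(H(\theta_1) \cap H(\theta_2))$: one must verify that after the change of variable $x \mapsto t_n x$ in the derivation of \eqref{e:cov_exp_rv}, the indicators $\ind{t_n x + r_n y_i \in H_n(\theta_1) \cap H_n(\theta_2)}$ collapse, as $r_n/t_n \to 0$, to $\ind{x \in H(\theta_1) \cap H(\theta_2)}$ almost everywhere, while the dominated convergence hypothesis is maintained uniformly in the two-halfspace intersection via Potter's bound \eqref{e:potters_bd}. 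This step is where the heavy-tailed case diverges qualitatively from the light-tailed case---here the intersection is \emph{not} asymptotically negligible, and the positive homogeneity of $\gamma$ ensures the resulting $\gamma(x)^{-\alpha(2k+2-\ell)}$ factor is integrable on $H(\theta_1) \cap H(\theta_2) \subset \{\gamma \geq 1\}$. Once this is in place, the Kolmogorov bound furnishes the CLT for each linear combination and the Cram{\'e}r--Wold device completes the finite-dimensional convergence to $\mathcal{G}$.
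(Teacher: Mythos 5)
Your proposal is correct and follows essentially the same route as the paper, which proves Theorem~\ref{t:fidi_heavy} by repeating the argument of Theorem~\ref{t:fidi_bigtail} verbatim with the heavy-tail bound of Lemma~\ref{l:4th_bd} and the covariance asymptotics of Proposition~\ref{p:mom_conv_rv}, with $t_n^d$ playing the role of $q_n$. Your extra step of lower-bounding $\Var(G_n(\Pn))$ via the Gram-type representation $\sum_{i,j} a_i a_j \nu_h\big(H(\theta_i)\cap H(\theta_j)\big) = \int \big(\sum_i a_i \ind{x \in H(\theta_i)}\big)^2 \nu_h(\dif x)$ is a detail the paper leaves implicit (its proof simply says the differing covariance asymptotics change nothing), and it is a welcome clarification since here, unlike the light-tailed case, the off-diagonal covariances do not vanish.
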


\begin{proof}
The proof of this result is the same, \emph{mutis mutandis}, as Theorem~\ref{t:fidi_bigtail}. Indeed, the differing covariance asymptotics in Proposition~\ref{p:mom_conv_rv} do not affect the proof in any way.
\end{proof}

On a final note, the same point as mentioned in Remark~\ref{r:clt_sparse} after Theorem~\ref{t:fidi_bigtail} holds for Theorem~\ref{t:fidi_heavy}, replacing $q_n$ with $t_n^d$.

\section{Conditional convergence bounds} \label{s:cond}

We conclude by rigorously establishing that the central limit theorem for local $U$-statistics converges faster in the case of the light-tail, as opposed to the case when $\xi > 0$ or the case of a heavy tail. Suppose for simplicity that $r_n \equiv r \in (0, \infty)$. First, we must recognize that $\lVert h_n^2 \rVert_f$ (where $a_1= 1$ and $a=0$ otherwise) can in this setup be bounded above by a constant, using property (H4) of $h^k_r$. Additionally, we use the trivial bound
\[
\int_{\R^d} \ind{x \in B(y, 4\kappa r)} f_n(x) \dif{x} \leq 1.
\]
Therefore, we have the following result, which yields strong support to the conclusion that in the conditional case, convergence happens quicker for lighter tails, especially upon recognition that $q_n \to \infty$ when $\xi > 0$. Denote $\bar{S}^*_{k,n}(\theta) = \big(\bar{S}^*_{k,n}(\theta) - \E[\bar{S}^*_{k,n}(\theta)] \big)/\sqrt{\Var(\bar{S}^*_{k,n}(\theta)}$. 

\begin{theorem} \label{t:conv_compare}
Suppose that $\mathcal{I}_{k,1}$ and $\mathcal{H}_{k,\ell}$ are positive. For the conditional setup as in Remark~\ref{r:cond}, we have for any $\theta \in S^{d-1}$ that if $t_n = o(n^{1/d})$ then for a heavy-tailed density generator, we have
\begin{align*}
&d_K\big( \bar{S}^*_{k,n}(\theta), Z \big) = O\bigg( \frac{t_n^{2dk}}{\sqrt{n}} \bigg), \quad \text{when } \xi > 0. \\
\shortintertext{For a light-tailed density generator,}
&d_K\big( \bar{S}^*_{k,n}(\theta), Z \big) = O\bigg( \frac{q_n^{2k}}{\sqrt{n}} \bigg), \quad \text{when } \xi > 0, \\
\shortintertext{and}
&d_K\big( \bar{S}^*_{k,n}(\theta), Z \big) = O\bigg( \frac{1}{\sqrt{n}} \bigg), \quad \text{when } \xi = 0 \text{ and } \beta < \infty.
\end{align*}
\end{theorem}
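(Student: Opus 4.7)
The plan is to invoke Theorem~\ref{t:pu} directly with the Poisson process $\Pn^\theta$ (intensity $nf_n$) in place of $\Pn$, taking $G_n = S^{*}_{k,n}(\theta)$ and $h_n = h^k_r$. Since $r_n \equiv r$, condition (H3) is not needed, and the hypotheses (H2), (H4) on $h^k_r$ are already in force. The only quantities that need to be estimated in the numerator of \eqref{e:wass_bound} are $\lVert h_n^2 \rVert_{f_n}$ and $B_n(\kappa)$; the variance in the denominator is supplied by Corollary~\ref{c:cond_var}.

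First I would handle the two numerator factors. Since $h^k_r \leq M$ by (H4) and $f_n$ integrates to $1$, we have
\[
\lVert h_n^2 \rVert_{f_n}^2 = \int_{(\R^d)^{k+1}} h^k_r(x_0,\dots,x_k)^4 \prod_{i=0}^k f_n(x_i) \dif{x_i} \leq M^4,
\]
so $\lVert h_n^2 \rVert_{f_n} \leq M^2$, independent of $n$. For $B_n(\kappa) = \sup_{y \in \R^d} n \int \ind{x \in B(y, 4\kappa r)} f_n(x)\dif{x}$, the trivial bound in the paragraph preceding the theorem gives $B_n(\kappa) \leq n$, so $1 \vee B_n(\kappa)^{3k/2} \leq n^{3k/2}$ for $n$ large. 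Substituting into \eqref{e:wass_bound}, the numerator is $O\bigl(n^{(k+1)/2} \cdot n^{3k/2}\bigr) = O(n^{2k+1/2})$.

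Next I would substitute the variance asymptotics from Corollary~\ref{c:cond_var}, which is justified precisely under the assumption $t_n = o(n^{1/d})$ and positivity of $\mathcal{I}_{k,1}$ or $\mathcal{H}_{k,\ell}$. In the heavy-tailed case, $\Var(S^{*}_{k,n}(\theta)) = \Theta(n^{2k+1}/t_n^{2dk})$, so the ratio is $O(t_n^{2dk}/\sqrt{n})$. For the light tail with $\xi > 0$, $\Var(S^{*}_{k,n}(\theta)) = \Theta(n^{2k+1}/q_n^{2k})$, giving $O(q_n^{2k}/\sqrt{n})$. For the light tail with $\xi = 0$ and $\beta < \infty$, $\Var(S^{*}_{k,n}(\theta)) = \Theta(n^{2k+1})$, yielding the clean $O(1/\sqrt{n})$ rate.

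There is no serious obstacle here: all the heavy lifting was done in Corollary~\ref{c:cond_var} and in the pre-theorem discussion that reduces $\lVert h_n^2 \rVert_{f_n}$ and $B_n(\kappa)$ to trivial bounds via the conditional normalization. The only thing to take care of is that in writing down the bound from Theorem~\ref{t:pu} one treats the conditioned intensity $nf_n$ correctly (so that \textit{both} numerator bounds genuinely do become dimensionless/linear in $n$), which is what makes the conditional rate qualitatively faster than in the unconditioned Theorems~\ref{t:fidi_bigtail} and \ref{t:fidi_heavy}: the heavy-tail correction $t_n^{2dk}$ and the exponential-tail correction $q_n^{2k}$ both vanish in the Gaussian-like $\xi = 0$ case, reflecting the intuition from Figure~\ref{f:cond_conv} that lighter tails force more clustering near the boundary of $H_n(\theta)$.
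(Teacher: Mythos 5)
Your proposal is correct and follows essentially the same route as the paper: invoke Theorem~\ref{t:pu} for the conditioned process, bound $\lVert h_n^2 \rVert_{f_n}$ by a constant via (H4) and the trivial bound $B_n(\kappa) \leq n$, then divide the resulting $O(n^{2k+1/2})$ numerator by the variance asymptotics of Corollary~\ref{c:cond_var} in each of the three regimes. No gaps to report.
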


\begin{proof}
The proof follows a similar formula to the ones above, noting the changes to the bounds of $B_n(\kappa)^{3k/2}$ and $\lVert h_n^2 \rVert_f$ of this section. Applying Corollary~\ref{c:cond_var} yields the desired asymptotics, upon taking $n$ so large that the scaled variance is close to $\mathcal{I}_{k,1}$ (respectively $\mathcal{H}_{k,1}$).
\end{proof}

In Theorem~\ref{t:conv_compare} there is no dependence on the rate at which $H_n$ diverges in the case of a Gaussian distribution or lighter tails. We surmise that this transition occurs when $\xi = 0$ and $\beta = \infty$, the regime as of yet uninvestigated.

\begin{acks}[Acknowledgments]
The author would like to thank Takashi Owada for his insightful feedback on an early draft of this paper.
\end{acks}
\vspace{-10pt}
\begin{funding}
The author gratefully acknowledges partial financial support from the NSF: 1934985 and 1940124.
\end{funding}
\vspace{-10pt}

\bibliography{TopFuncHRS}
\bibliographystyle{imsart-number}

\end{document}